       \font\tenmsb=msbm10
       \font\sevenmsb=msbm7
       \font\fivemsb=msbm5
\let\amstexloaded@\relax\fi
       \def\spaces@{\space\space\space\space\space}
       \def\spaces@@{\spaces@\spaces@\spaces@\spaces@\spaces@}
       \def\space@.  {\futurelet\space@\relax}
       \def\Err@#1{\errhelp\defaulthelp@\errmessage{AmS-TeX error: #1}}
       \def\relaxnext@{\let\next\relax}
       \def\accentfam@{7}
       \def\noaccents@{\def\accentfam@{0}}
       \def\Cal{\relaxnext@\ifmmode\let\next\Cal@\else
       \def\next{\Err@{Use \string\Cal\space only in math mode}}\fi\next}
       \def\Cal@#1{{\Cal@@{#1}}}
       \def\Cal@@#1{\noaccents@\fam\tw@#1}
       \def\Bbb{\relaxnext@\ifmmode\let\next\Bbb@\else
       \def\next{\Err@{Use \string\Bbb\space only in math mode}}\fi\next}
       \def\Bbb@#1{{\Bbb@@{#1}}}
       \def\Bbb@@#1{\noaccents@\fam\msbfam#1}
\newtheorem{thm}{Theorem}[section]
\newtheorem{prop}[thm]{Proposition}
\newtheorem{lem}[thm]{Lemma}
\newtheorem{rem}[thm]{Remark}
\newtheorem{iteration lemma}[thm]{iteration Lemma}
\newtheorem{cor}[thm]{Corollary}
\newtheorem*{acknowledgements*}{ACKNOWLEDGEMENTS}
\begin{document}

\setlength{\columnsep}{5pt}
\title{\bf The reverse order law of the $(b, c)$-inverse in rings }
\author{ Yuanyuan Ke$^{a}$\footnote{ E-mail:  keyy086@126.com },
\ Dijana Mosi\'{c}$^{b}$\footnote{ Corresponding author. E-mail: dijana@pmf.ni.ac.rs}, \ Jianlong Chen$^{a}$\footnote{ E-mail: jlchen@seu.edu.cn },
\\\\
$^{a}$Department of  Mathematics, Southeast University,  Nanjing 210096,  China\\
$^{b}$Faculty of Sciences and Mathematics, University of Ni\v{s}, Vi\v{s}egradska 33,\\ 18000 Ni\v{s}, Serbia
}
     \date{}

\maketitle
\begin{quote}
{\textbf{Abstract}: \small We present equivalent conditions of
reverse order law for the $(b, c)$-inverse
$(aw)^{(b,c)}=w^{(b,s)}a^{(t,c)}$ to hold in a ring. Also, we
study various mixed-type reverse order laws for the $(b,
c)$-inverse. As a consequence, we get results related to the
reverse order law for the inverse along an element. More general
case of reverse order law $(a_1a_2)^{(b_3, c_3)}=a_2^{(b_2,
c_2)}a_1^{(b_1, c_1)}$ is considered too.

\emph{Keywords:}{\small \ Generalized inverse; $(b, c)$-inverse;
the inverse along an element; ring.}

\emph{2010 MSC:}{\small \ 15A09, 16E50, 16B99.}

}

\end{quote}

\section{Introduction}
    Throughout the  paper, we assume that $R$ is a ring with identity. We use $R^\bullet$ to denote
the set of all idempotents ($p^2=p$) of $R$. %An involution of $R$
%is any map $*:R\rightarrow R$ satisfying
%    $$
%    (a^\ast)^\ast=a,\  (ay)^\ast=y^\ast a^\ast,\  (a+y)^\ast=a^\ast+y^\ast,\  \mbox{for any}~  a, y \in R.
%    $$

     Let $b, c\in R$.
    The concept of the $(b, c)$-inverse as a generalization of the Moore-Penrose inverse, the Drazin inverse, the Chipman's weighted
    inverse and the Bott-Duffin inverse, was for the first time  introduced by Drazin  in 2012 \cite{D} in the settings of
    rings.  Recall that an element $a\in R$ is said to be $(b, c)$-invertible if there exists $y\in R$ such that
    $$y\in (bRy)\cap (yRc),\quad\quad yab=b, \quad\quad cay=c.$$
    If such $y$ exists, it is unique and it is called the $(b, c)$-inverse of $a$, denoted by $a^{(b, c)}$.
    The set of all $(b, c)$-invertible elements of $R$ will be denoted by $R^{(b, c)}$.
    For more results of the $(b, c)$-inverse, we refer the reader to see \cite{Drazin-LMA2014,Drazin-LAA14-Uniqueproof,Ke,W-C-C}.

    Drazin  \cite{D} introduced an outer generalized inverse relative to a pair of idempotents $e, f\in R$
   which intermediates between the Bott-Duffin inverse and the $(b,
   c)$-inverse. This generalized inverse is called  the Bott-Duffin $(e, f)$-inverse.
    Recall that the Bott-Duffin $(e, f)$-inverse of $a\in R$ is the element $y\in R$ which satisfies
    $$
    y=ey=yf, \quad yae=e, \quad fay=f.
    $$
    If the Bott-Duffin $(e, f)$-inverse of $a$ exists, it is unique and denoted by $a^{BD}_{e, f}$.
    According to Kant\'{u}n-Montiel \cite[Proposition 3.4]{Kantun-Montiel-LMA14}, an image-kernel $(e, f)$-inverse is the
    Bott-Duffin $(e, 1-f)$-inverse.  More results on the Bott-Duffin $(e, f)$-inverse and image-kernel $(p, q)$-inverse can be found in \cite{Ke-LAA16,MD-charIK,MDjKM,MDj-innerIK}.

Mary in \cite{Mary2011} introduced a new generalized inverse,
called the inverse along element. An element $a\in R$ is said to
be invertible along $d\in R$ (or Mary invertible) if there exists
$y\in R$ such that
$$yad=d=day, \quad yR \subseteq dR, \quad Ry\subseteq Rd.$$
If such $y\in R$ exists, it is unique and  will be denoted by
$a^{\parallel d}$. We use $R^{\| d}$ to denote the all Mary invertible elements in $R$. This inverse unify some well-known generalized
inverses, such as the group inverse, Drazin inverse and
Moore-Penrose inverse.  Also, the inverse along element $d$ is a
special case of $(b, c)$-inverse, for $(b, c)=(d, d)$
\cite[Proposition 6.1]{D}. Several authors also have studied this
new outer inverse (see
\cite{Benitez-Boasso-1507,Benitez-Boasso-1509,Mary-P
2012,Zhu-Chen-Patricio-LMA15104,Zhu-Patricio-Chen-Zhang-LMA15105}).

We recall that a $(p,q)$-outer generalized inverse of $a$ with
prescribed idempotents $p$ and $q$ introduced by Djordjev\'{c} and
Wei in \cite[Definition 2.1]{Djordjevic-Wei-CommAl05}. Given $p,~q
\in R^\bullet$, an element $a \in R$ has a $(p,q)$-outer
generalized inverse $y \in R$, if
$$ yay=y,\quad ya=p,\quad ay=1-q. $$
This $(p,q)$-outer inverse is unique if it exists, and we write
$y=a_{p,q}^{(2)}$.  If
$a^{(2)}_{p,q}$ satisfies $aa^{(2)}_{p,q}a=a$, then
$a^{(2)}_{p,q}=a^{(1,2)}_{p,q}$ is the $(p,q)$-reflexive
generalized inverse of $a$. The set of all $(p,q)$-outer ($(p,q)$-reflexive) generalized
invertible elements of $R$ will be denoted by $R^{(2)}_{p,q}$ ($R^{(1, 2)}_{p,q}$, resp.).

An element $a\in R$ is (von Neumann) regular if it has an inner
inverse $y$, i.e. if there exists $y\in R$ such that $aya=a$. Any
inner inverse of $a$ will be denoted by $a^-$. The set of all
regular elements of $R$ will be denoted by $R^-$.
An element $a\in R$ is called group invertible if there is $y\in R$ such that $$aya=a, \quad yay=y, \quad ay=ya.$$
This $y$ also is unique when it exists, and we denote it as $a^\#$. The set of all group invertible elements in $R$ is denoted by $R^\#$.

\begin{lem}\emph{\cite[Theorem 1]{Drazin1958}}\label{ke-(b,c)-4-lem6-commut}
Let $a, d\in R$. If $a\in R^\#$ and $da=ad$, then $a^\#d=da^\#$.
\end{lem}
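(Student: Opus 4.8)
The plan is to reduce everything to a statement about the idempotent $p:=aa^{\#}=a^{\#}a$ and then to recognise $a^{\#}$ as part of the inverse of a unit that $d$ is forced to commute with. Write $x:=a^{\#}$. From the defining relations $axa=a$, $xax=x$ and $ax=xa$ one checks immediately that $p$ is idempotent and that $pa=ap=a$ and $px=xp=x$; in particular $a$ is invertible in the corner ring $pRp$ with inverse $x$. Since $d$ already commutes with $a$ by hypothesis, the whole problem collapses to showing that $d$ commutes with $p$.

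The main obstacle is exactly this commutation $dp=pd$, because commuting with $a$ does not visibly propagate to $a^{\#}$. I would establish it by two one-sided calculations. Using $p=xa$ together with $ad=da$ gives $pd=(xd)a$, so that $pd(1-p)=(xd)\,a(1-p)=(xd)(a-ap)=0$ because $ap=a$; hence $pd=pdp$. Symmetrically, using $p=ax$ together with $ad=da$ gives $dp=a(dx)$, so that $(1-p)dp=[(1-p)a](dx)=(a-pa)(dx)=0$ because $pa=a$; hence $dp=pdp$. Comparing the two identities yields $pd=pdp=dp$, which is the commutation we need.

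With $dp=pd$ in hand I would finish by exhibiting a unit through which $d$ must commute. A direct computation shows that $a+1-p$ is invertible with inverse $a^{\#}+1-p$; indeed $(a+1-p)(a^{\#}+1-p)=ax+(1-p)=1$, and likewise on the other side, using $ap=a$, $px=x$ and $(1-p)^2=1-p$. Now $d$ commutes with $a$, with $p$, and therefore with $1-p$, so $d$ commutes with the unit $a+1-p$ and hence with its inverse $a^{\#}+1-p$. Cancelling the commutation relation for $1-p$ then leaves $da^{\#}=a^{\#}d$, as required. As an alternative ending, once $dp=pd$ is known one may argue inside the corner ring $pRp$, where $x$ is the genuine two-sided inverse of $a$ and commutation with it is automatic for any element of $pRp$ commuting with $a$, such as $pdp$.
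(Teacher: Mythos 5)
Your argument is correct and complete; every step checks out. One thing to be aware of: the paper does not prove this lemma at all — it is imported verbatim as Theorem~1 of Drazin's 1958 paper, where it is stated and proved for the general pseudo-inverse (Drazin inverse) of arbitrary index via power identities of the form $a^{D}=(a^{D})^{k+1}a^{k}=a^{k}(a^{D})^{k+1}$. So there is no in-paper proof to match; what you have supplied is a self-contained alternative specialized to the group-invertible case. Your route is the standard ``unit plus complementary idempotent'' technique: the only genuinely nontrivial point is $dp=pd$ for $p=aa^{\#}=a^{\#}a$, and your two one-sided computations $pd=pdp$ (from $pd=(a^{\#}d)a$ and $ap=a$) and $dp=pdp$ (from $dp=a(da^{\#})$ and $pa=a$) settle it cleanly; after that, commutation with the unit $a+1-p$ and hence with its inverse $a^{\#}+1-p$ finishes the proof, and the corner-ring ending is an equally valid variant. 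What your approach buys is conceptual transparency and brevity in the index-one case; what Drazin's original power-manipulation argument buys is that it works uniformly for any Drazin index, which is the generality in which the cited Theorem~1 is actually stated. For the purposes of this paper only the group-inverse case is used (in the proof of Theorem~\ref{ke-reverse(b, c)-th-main}, applied to $(a_2a_2')^{\#}$ and $(a_1'a_1)^{\#}$), so your proof fully covers what is needed.
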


For an element $a \in R$, we define the following image ideals
$$aR=\{ax: x\in R \}, \quad Ra=\{xa: x\in R \},$$
and kernel ideals
$$ a^\circ=\{x\in R: ax=0 \}, \quad ~^\circ a=\{x\in R:xa=0 \}.$$
Let $a, y \in R$. Then $aR = yR$ if and only if there exist $u, v \in R$ such that $a = yu$ and $y = av$. Similarly, $Ra = Ry$ if and only if there exist $s, t \in R$ such that $a = sy$ and $y = ta$.

\section{Reverse order laws for the $(b, c)$-inverse}

It is well known that for nonsingular matrices $A$ and $B$ of the same size, we have
$$(AB)^{-1}=B^{-1}A^{-1}.$$
This equality is known as the reverse order law.

The study of the reverse order law for generalized inverses traces
back to the work of Greville \cite{Greville-SIAM1966}, who studied
the reverse order law for the Moore-Penrose inverse of a matrix.
Since then, a large amount of work has been devoted to the study
of this problem, and equivalent conditions for the Moore-Penrose
inverse reverse order law to hold have been proved in the setting
of matrices, operators, or elements of rings with involution
\cite{Hartwig-LAA1986,Djordjevic-Dincic-JMAA10,MDjfur-rolMP}.

The ``reverse order law" in a ring $R$, says that if $a, b\in R$
both invertible, then $(ab)^{-1}=b^{-1}a^{-1}$. It is a natural
question whether this has extension to generalized inverses. The
reverse order law for the generalized inverse is an useful
computational tool in applications (solving linear equations in
linear algebra or numerical analysis), and it is also interesting
from the theoretical point of view.

In this section, we investigate the reverse order law for the $(b,
c)$-inverse in rings. Precisely, we give necessary and sufficient
conditions for rules $(aw)^{(b,c)}=w^{(b,s)}a^{(t,c)}$, $(aw)^{(b,
c)}=(a^{(t, c)}aw)^{(b, s)}a^{(t, c)}$, $(aw)^{(b, c)}=w^{(b,
s)}(aww^{(b, s)})^{(t, c)}$, $(a^{(t, c)}aw)^{(b, u)}=w^{(b,
s)}a^{(t, c)}a$, $(aww^{(b, s)})^{(v, c)}=ww^{(b, s)}a^{(t, c)}$
and $(aw)^{(b, c)}=w^{(b, c)}(a^{(t, c)}aww^{(b,
c)})^{(b,c)}a^{(t, c)}$ to be satisfied. As an application of
these results, we obtain corresponding results related to the
inverse along an element. Sufficient conditions for reverse order
laws for the $(p, q)$-outer generalized inverse are presented.
Also, we consider the more general case of reverse order law
$(a_1a_2)^{(b_3, c_3)}=a_2^{(b_2, c_2)}a_1^{(b_1, c_1)}$.

The following lemmas will be useful in the sequel.

%\begin{lem}\emph{\cite{W-C-C}}\label{ke-reverse(b, c)-lem1}
%Let $a, b, c\in R$. If $a$ is $(b, c)$-invertible, then $b, c\in R^-$.
%\end{lem}

\begin{lem}\label{ke-reverse(b, c)-lem2}
Let $a, b, c\in R$. Then the following statements are equivalent:
\begin{enumerate}
\item[\rm (i)] $a^{(b, c)}$ exists;

\item[\rm (ii)] \emph{\cite[Proposition
6.1]{D}}\label{drazinLAA12-thm2.2} there exists $y\in R$ such that
$yay=y$, $yR=bR$ and $Ry=Rc$;

%$b, c\in R^-$, and there exists $y\in R$ such that
%          \begin{equation}\label{ke-reverse(b, c)-eq1}
%          y=bb^-y=yc^-c, \quad bb^-=yabb^-, \quad c^-c=c^-cay;
%          \end{equation}

\item[\rm (iii)] \emph{\cite[Theorem 2.9]{Ke-dragana}} $b, c\in
R^-$, and there exists $y\in R$ such that
\begin{equation}\label{ke-reverse(b, c)-eq2}
          y=bb^-y=yc^-c, \quad b=yab, \quad  c=cay.
\end{equation}
\end{enumerate}
In this case, $y=a^{(b, c)}$.
\end{lem}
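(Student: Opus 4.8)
The plan is to establish the cycle of implications (i) $\Rightarrow$ (ii) $\Rightarrow$ (iii) $\Rightarrow$ (i), after which the concluding claim $y = a^{(b,c)}$ follows at once: in each case the witnessing $y$ satisfies the defining relations of the $(b,c)$-inverse, and that inverse is unique (as recorded in the introduction). The only external tool I need is the pair of generator equivalences stated just before this section: $yR = bR$ holds exactly when $b = yu$ and $y = bv$ for suitable $u, v \in R$, and $Ry = Rc$ holds exactly when $c = sy$ and $y = tc$ for suitable $s, t \in R$. Every step below is then a substitution using these relations together with the outer relation $yay = y$.

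For (i) $\Rightarrow$ (ii), I would start from $y \in bRy$, $y \in yRc$, $yab = b$, $cay = c$. Writing $y = bzy$ and substituting the trailing factor into $yay$ gives $yay = ya(bzy) = (yab)zy = bzy = y$, so $y$ is an outer inverse. Then $y \in bRy \subseteq bR$ together with $b = yab \in yR$ yields $yR = bR$, and symmetrically $y \in yRc \subseteq Rc$ with $c = cay \in Ry$ yields $Ry = Rc$.

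For (ii) $\Rightarrow$ (iii), I would first recover the two equalities $b = yab$ and $c = cay$: using $b = yu$ from $yR = bR$, one computes $yab = ya(yu) = (yay)u = yu = b$, and symmetrically $cay = c$ from $c = sy$. Next comes the key point, the regularity of $b$ and $c$. Using $y = bv$ I substitute into $b = yab$ to get $b = (bv)ab = b(va)b$, so $va$ is an inner inverse of $b$; dually, $y = tc$ substituted into $c = cay$ gives $c = c(at)c$, so $c \in R^-$. Once $b, c$ are regular, the membership $y \in bR$ forces $y = bb^- y$ for any inner inverse $b^-$ (because $bb^- b = b$), and $y \in Rc$ forces $y = yc^- c$; together with $b = yab$ and $c = cay$ this is precisely (iii).

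Finally, (iii) $\Rightarrow$ (i) is immediate: $y = bb^- y \in bRy$ and $y = yc^- c \in yRc$ exhibit the two sandwich memberships, while $b = yab$ and $c = cay$ are given outright, so $y$ satisfies Drazin's defining conditions for $a^{(b,c)}$. I do not expect a genuine obstacle in any of these steps; the one place demanding care is the regularity transfer in (ii) $\Rightarrow$ (iii), where the correct grouping of the product $bvab$ (respectively $catc$) must be chosen so that it reads as $b(va)b$ (respectively $c(at)c$), that is, as a bona fide inner-inverse relation rather than as a mere one-sided unit relation.
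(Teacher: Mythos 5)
Your proof is correct. Note, however, that the paper offers no proof of this lemma at all: both (ii) and (iii) are imported by citation (Drazin's Proposition~6.1 and Theorem~2.9 of the Ke--Cvetkovi\'c-Ili\'c--Chen preprint, respectively), so there is no in-text argument to compare against. Your cycle (i)~$\Rightarrow$~(ii)~$\Rightarrow$~(iii)~$\Rightarrow$~(i) is a valid self-contained verification: the computation $yay = ya(bzy) = (yab)zy = y$ correctly extracts outer invertibility from the sandwich condition; the regularity transfer $b = (bv)ab = b(va)b$ and $c = c(at)c$ is grouped exactly as it needs to be; and the closing step correctly reads $y = bb^-y$ and $y = yc^-c$ back as the memberships $y \in bRy$ and $y \in yRc$. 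The appeal to uniqueness of the $(b,c)$-inverse for the final identification $y = a^{(b,c)}$ is legitimate, since uniqueness is recorded in the introduction. No gaps.
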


\begin{lem}\emph{\cite[Corollary 2.4, Remark 2.5]{Drazin-LMA2014}}\label{ke-(b,c)-4-rem1}
Let  $a, b, c, w\in R$. If  $a^{(b, c)}$ exists, we have
\begin{enumerate}
     \item[\rm (i)] if $ab=ba$ and $ca=ac$, then $a^{(b, c)}$ commutes with $a$;
     \item[\rm (ii)] if $ab=ba$ and $cb=bc$, then $a^{(b, c)}$ commutes with $b$;
     \item[\rm (iii)] if $ac=ca$ and $cb=bc$, then $a^{(b, c)}$ commutes with $c$.
   \end{enumerate}
\end{lem}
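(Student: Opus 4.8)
The plan is to write $y = a^{(b,c)}$ and work entirely from the characterization in Lemma \ref{ke-reverse(b, c)-lem2}: this $y$ satisfies $yay = y$, $yab = b$, $cay = c$, together with the ideal equalities $yR = bR$ and $Ry = Rc$. The first step I would take is to record the two idempotents $p = ya$ and $q = ay$. Using $yay = y$ one checks at once that $p^2 = yaya = (yay)a = ya = p$ and, likewise, $q^2 = ayay = a(yay) = ay = q$. Moreover $py = yay = y = yq$, so $p$ acts as a left identity on the whole right ideal $yR = bR$ (for $yr \in yR$ we get $p(yr) = (py)r = yr$), and $q$ acts as a right identity on the whole left ideal $Ry = Rc$ (for $ry \in Ry$ we get $(ry)q = r(yq) = ry$). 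These two one-sided identity properties are meant to be the engine of all three parts.

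For part (i), with $ab = ba$ and $ca = ac$, the idea is to show that the single element $(ya)(ay) = ya^2y$ equals both $ya$ and $ay$. Writing $y = b\alpha$ and $b = y\beta$, which is possible since $yR = bR$, the relation $ab = ba$ gives $ay = ab\alpha = ba\alpha = y\beta a\alpha \in yR$, so the left-identity property of $p$ yields $(ya)(ay) = ay$. Dually, writing $y = \gamma c$ and $c = \delta y$, possible since $Ry = Rc$, the relation $ca = ac$ gives $ya = \gamma ca = \gamma ac = \gamma a\delta y \in Ry$, so the right-identity property of $q$ yields $(ya)(ay) = ya$; comparing the two expressions gives $ya = ay$. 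Parts (ii) and (iii) I would run along exactly the same lines. For (ii) I would use $cb = bc$ with $y = \gamma c$, $c = \delta y$ to place $yb = \gamma cb = \gamma bc = \gamma b\delta y \in Ry$, whence $(yb)q = yb$, while $ab = ba$ collapses the same word as $(yb)q = ybay = yaby = (yab)y = by$, giving $yb = by$. For (iii) I would use $cb = bc$ with $y = b\alpha$, $b = y\beta$ to place $cy = cb\alpha = bc\alpha = y\beta c\alpha \in yR$, whence $p(cy) = cy$, while $ac = ca$ collapses it as $p(cy) = yacy = ycay = y(cay) = yc$, giving $cy = yc$.

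The only real obstacle here is bookkeeping rather than depth. In each part exactly one commutativity hypothesis must be spent proving the correct ideal membership, via the factorization criterion $aR = yR \iff a = yu,\ y = av$ recalled at the end of the introduction, and the other hypothesis is spent collapsing the resulting four-factor word $ya\cdot ay$, $yb\cdot ay$, or $ya\cdot cy$ back onto $b$ or $c$ through the defining relations $yab = b$ and $cay = c$. The hard part will be pairing each hypothesis with the correct one-sided ideal, namely $ab = ba$ with $yR = bR$ on the right and $ca = ac$ (or $cb = bc$) with $Ry = Rc$ on the left, since interchanging them destroys the membership step. I expect no deeper difficulty: once $p$, $q$ and their identity actions are in place, each assertion reduces to a two-line computation.
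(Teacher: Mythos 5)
Your proof is correct; every step checks out. The paper itself gives no argument for this lemma---it is quoted from Drazin \cite[Corollary 2.4, Remark 2.5]{Drazin-LMA2014}---so there is no internal proof to compare against, and what you have supplied is a valid self-contained derivation from the characterization in Lemma \ref{ke-reverse(b, c)-lem2}. The bookkeeping is done correctly: in each part the one commutativity hypothesis that moves $a$, $b$ or $c$ past the generator of the relevant ideal ($ab=ba$ past $b$ in $yR=bR$, $ca=ac$ or $cb=bc$ past $c$ in $Ry=Rc$) establishes the membership $ay, cy\in yR$ or $ya, yb\in Ry$, and the other hypothesis collapses the four-letter word via $yab=b$ or $cay=c$. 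The verifications $(ya)(ay)=ay$ and $(ya)(ay)=ya$ in part (i), $ybay=y(ab)y=by$ in part (ii), and $yacy=y(ca)y=yc$ in part (iii) are all sound.

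For context, Drazin's own route is through a general commuting theorem (his Theorem 2.3: $da_1=a_2d$, $db_1=b_2d$, $dc_1=c_2d$ imply $y_2d=dy_1$), from which the three parts follow by taking $a_1=a_2=a$, $b_1=b_2=b$, $c_1=c_2=c$ and $d=a$, $b$, $c$ respectively. Your argument is essentially the specialization of that proof to the case at hand: the idempotents $p=ya$ and $q=ay$ acting as one-sided identities on $bR$ and $Rc$ are exactly the engine of the general statement. What you lose is the more general two-element form; what you gain is a short, elementary and entirely self-contained verification that does not require importing the external theorem.
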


Specially, if $(b, c)=(d, d)$, $a$ is invertible along $d$ with $(d, d)$-inverse $a^{\| a}$ and $ad=da$, then $a^{\| d}$ commutes with $a$ and $d$.

First, we will give the necessary and sufficient condition for the
reverse order law $$(aw)^{(b, c)}=w^{(b, s)}a^{(t, c)}.$$

\begin{thm}\label{ke-reverse(b, c)-th1}
Let $a, w, b, c, s, t\in R$ be such that $a^{(t, c)}$ and $w^{(b,
s)}$ exist. Then the following statements are equivalent:
\begin{enumerate}
\item[\rm (i)] $aw\in R^{(b, c)}$ and $(aw)^{(b, c)}=w^{(b,
s)}a^{(t, c)}$;

\item[\rm (ii)] $b=w^{(b, s)}a^{(t, c)}awb$ and $c=caww^{(b,
s)}a^{(t, c)}$.
\end{enumerate}
\end{thm}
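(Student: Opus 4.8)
The plan is to work straight from the definition of the $(b,c)$-inverse and to show that the candidate element $y := w^{(b,s)}a^{(t,c)}$ already satisfies the two membership requirements $y\in bRy$ and $y\in yRc$ automatically, so that ``$y$ is the $(b,c)$-inverse of $aw$'' collapses onto exactly the two absorption identities listed in (ii).

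First I would unwind the hypotheses. Writing $p:=a^{(t,c)}$ and $q:=w^{(b,s)}$, the definition of the $(t,c)$-inverse gives $p\in(tRp)\cap(pRc)$ and the definition of the $(b,s)$-inverse gives $q\in(bRq)\cap(qRs)$. In particular there exist $z,z'\in R$ with $q=bzq$ and $p=pz'c$. I would then check the two membership conditions for $y=qp$: from $q=bzq$ one gets $y=qp=bz(qp)=bzy\in bRy$, and from $p=pz'c$ one gets $y=qp=(qp)z'c=yz'c\in yRc$. Hence $y\in(bRy)\cap(yRc)$ holds unconditionally, using only the existence of $a^{(t,c)}$ and $w^{(b,s)}$.

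Recall now that, by definition, $y$ is the $(b,c)$-inverse of $aw$ precisely when $y\in(bRy)\cap(yRc)$, $y(aw)b=b$ and $c(aw)y=c$. Since the membership clause is automatic, the two surviving requirements, after substituting $y=w^{(b,s)}a^{(t,c)}$, read exactly $b=w^{(b,s)}a^{(t,c)}awb$ and $c=caww^{(b,s)}a^{(t,c)}$, i.e. the pair of equations in (ii). For (i)$\Rightarrow$(ii), if $(aw)^{(b,c)}=y$ then these identities hold by the definition of the $(b,c)$-inverse. For (ii)$\Rightarrow$(i), the identities of (ii) together with the automatic membership conditions show that $y$ meets every clause of the definition, so by uniqueness (Lemma~\ref{ke-reverse(b, c)-lem2}) we conclude $aw\in R^{(b,c)}$ and $(aw)^{(b,c)}=y=w^{(b,s)}a^{(t,c)}$.

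The computation is essentially immediate once the correct observation is isolated; the only genuine obstacle is recognizing that the two membership conditions $y\in bRy$ and $y\in yRc$ are \emph{forced} by the one-sided ideal data carried by $a^{(t,c)}$ and $w^{(b,s)}$, so that no hypothesis beyond the absorption identities of (ii) is required. One could instead run the argument through the $yR=bR$, $Ry=Rc$ form of Lemma~\ref{ke-reverse(b, c)-lem2}, but that would additionally demand checking the reverse inclusions $bR\subseteq yR$ and $Rc\subseteq Ry$, so the direct route via $y\in(bRy)\cap(yRc)$ is the cleaner one here.
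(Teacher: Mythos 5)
Your proof is correct and follows essentially the same route as the paper's: both rest on the observation that the side conditions on $y=w^{(b,s)}a^{(t,c)}$ (your membership $y\in(bRy)\cap(yRc)$, the paper's $y=bb^-y=yc^-c$ via Lemma~\ref{ke-reverse(b, c)-lem2}(iii)) are inherited automatically from $w^{(b,s)}$ and $a^{(t,c)}$, leaving only the two absorption identities of (ii). The only cosmetic difference is that you argue from the raw definition rather than from the inner-inverse characterization, which even spares you invoking the (automatic) regularity of $b$ and $c$.
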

\proof
    Since $a^{(t, c)}$ exists, by Lemma \ref{ke-reverse(b, c)-lem2}, $c\in R^-$ and $a^{(t, c)}c^-c=a^{(t, c)}$. Similarly, $b\in R^-$ and $bb^-w^{(b, s)}=w^{(b, s)}$.
    Again using Lemma \ref{ke-reverse(b, c)-lem2}, $aw\in R^{(b, c)}$ and $(aw)^{(b, c)}=w^{(b, s)}a^{(t, c)}$ if and only if $b, c\in R^-$,
    $$w^{(b, s)}a^{(t, c)}=bb^-w^{(b, s)}a^{(t, c)}=w^{(b, s)}a^{(t, c)}c^-c, \quad b=w^{(b, s)}a^{(t, c)}awb,\quad c=caww^{(b, s)}a^{(t, c)}.$$
    Hence, (i) is equivalent to (ii).
\qed

If we suppose that $a$ and $w$ satisfy conditions $R a\subseteq
Rs$ and $wR\subseteq tR$, we get the next result as a consequence
of Theorem \ref{ke-reverse(b, c)-th1}.

\begin{cor}\label{ke-reverse(b, c)-cor1}
Let $a, w, b, c, s, t\in R$ be such that $a^{(t, c)}$ and $w^{(b, s)}$ exist. If $Ra\subseteq Rs$ and $wR\subseteq tR$, then $aw\in R^{(b, c)}$ and $(aw)^{(b, c)}=w^{(b, s)}a^{(t, c)}$.
\end{cor}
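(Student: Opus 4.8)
The plan is to reduce everything to Theorem \ref{ke-reverse(b, c)-th1}. Since $a^{(t,c)}$ and $w^{(b,s)}$ are assumed to exist, that theorem applies verbatim, so it suffices to check its condition (ii), namely $b = w^{(b,s)}a^{(t,c)}awb$ and $c = caww^{(b,s)}a^{(t,c)}$. Writing $y_1 = w^{(b,s)}$ and $y_2 = a^{(t,c)}$ for brevity, I would first record from Lemma \ref{ke-reverse(b, c)-lem2}(ii) the outer-inverse identities $y_1 w y_1 = y_1$ and $y_2 a y_2 = y_2$, together with the one-sided ideal equalities $Ry_1 = Rs$ and $y_2 R = tR$; from part (iii) I would also use the defining relations $b = y_1 w b$ and $c = c a y_2$.

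The key step is to convert the two inclusion hypotheses into the sharper relations $y_2 a w = w$ and $a w y_1 = a$. For the first, $wR \subseteq tR = y_2 R$ forces $w \in y_2 R$, so $w = y_2 m$ for some $m \in R$; then $y_2 a w = y_2 a y_2 m = y_2 m = w$, using $y_2 a y_2 = y_2$. For the second, $Ra \subseteq Rs = Ry_1$ forces $a \in R y_1$, so $a = r y_1$ for some $r \in R$; then $a w y_1 = r y_1 w y_1 = r y_1 = a$, using $y_1 w y_1 = y_1$. With these two identities in hand, condition (ii) of Theorem \ref{ke-reverse(b, c)-th1} follows by direct substitution: $y_1 y_2 a w b = y_1 (y_2 a w) b = y_1 w b = b$, and $c a w y_1 y_2 = c (a w y_1) y_2 = c a y_2 = c$. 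Hence (ii) holds, and the theorem delivers $aw \in R^{(b,c)}$ with $(aw)^{(b,c)} = w^{(b,s)} a^{(t,c)}$.

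I do not expect a genuine obstacle here, since once the outer-inverse relations $y_2 a y_2 = y_2$ and $y_1 w y_1 = y_1$ are combined with the ideal inclusions the computation is immediate. The only point requiring care is bookkeeping: matching each inclusion to the correct one-sided ideal ($wR \subseteq tR$ pairing with $y_2 R$ and $Ra \subseteq Rs$ pairing with $Ry_1$), so that the idempotent $y_2 a$ acts on $w$ from the left and the idempotent $w y_1$ acts on $a$ from the right. Getting these sides straight is what makes the two substitutions collapse cleanly to $b$ and $c$.
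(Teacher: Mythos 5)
Your proof is correct and follows essentially the same route as the paper: reduce to condition (ii) of Theorem \ref{ke-reverse(b, c)-th1} and verify the two identities from the inclusion hypotheses. The only cosmetic difference is that you factor $w$ through $a^{(t,c)}$ and $a$ through $w^{(b,s)}$ directly via $y_2R=tR$ and $Ry_1=Rs$, whereas the paper passes through inner inverses of $t$ and $s$ (writing $w=tt^-w$, $a=as^-s$); both yield the same key relations $a^{(t,c)}aw=w$ and $aww^{(b,s)}=a$.
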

\proof Since $a^{(t, c)}$ and $w^{(b, s)}$ exist, by Lemma
\ref{ke-reverse(b, c)-lem2}, $s, t\in R^-$. If $Ra\subseteq Rs$,
then we get $a=xs$, for some $x\in R$, and so $a=xss^-s=as^-s$.
Similarly, the condition $wR\subseteq tR$ and $t\in R^-$ imply
$w=tt^-w$. Therefore,
$$\begin{array}{lcl}
w^{(b, s)}a^{(t, c)}awb&=&w^{(b, s)}a^{(t, c)}a(tt^-w)b=w^{(b, s)}(a^{(t, c)}att^-)wb\\
&=&w^{(b, s)}(tt^-)wb=w^{(b, s)}(tt^-w)b\\
&=&w^{(b, s)}wb=b,
\end{array}$$
and
$$\begin{array}{ccl}
caww^{(b, s)}a^{(t, c)}&=&c(as^-s)ww^{(b, s)}a^{(t, c)}=ca(s^-sww^{(b, s)})a^{(t, c)}\\
&=&cass^-a^{(t, c)}=caa^{(t, c)}=c.
\end{array}$$
Applying Theorem \ref{ke-reverse(b, c)-th1}, we can conclude that $aw\in R^{(b, c)}$ and $(aw)^{(b, c)}=w^{(b, s)}a^{(t, c)}$.
\qed

\medskip
When $s=c$ and $t=b,$ we have the following result.

%\begin{prop}\label{ke-reverse(b, c)-prop1}
%Let $a, w, b, c\in R$ be such that $a^{(b, c)}$ and $w^{(b, c)}$ exist. Then the following are equivalent:
%\begin{enumerate}
%\item[(i)] $aw\in R^{(b, c)}$ and $(aw)^{(b, c)}=w^{(b, c)}a^{(b, c)}$;
%\item[(ii)] $b=w^{(b, c)}a^{(b, c)}awb$ and $c=caww^{(b, c)}a^{(b, c)}$.
%\end{enumerate}
%\end{prop}
%
%\begin{cor}
%Let $a, w, b, c\in R$ be such that $a^{(b, c)}$ and $w^{(b, c)}$ exist. If $Ra\subseteq Rc$ and $wR\subseteq bR$, then $aw\in R^{(b, c)}$ and $(aw)^{(b, c)}=w^{(b, c)}a^{(b, c)}$.
%\end{cor}

\begin{cor}
Let $a, w, b, c\in R$ be such that $a^{(b, c)}$ and $w^{(b, c)}$ exist.
\begin{enumerate}
\item[\rm (i)]If $wb=bw$ and $ac=ca$, then $aw\in R^{(b, c)}$ and
$(aw)^{(b, c)}=w^{(b, c)} a^{(b, c)}$.

%\item[(i)$'$] If $ab=ba$ and $wc=cw$, then
% $wa\in R^{(b, c)}$ and $(wa)^{(b, c)}=a^{(b, c)} w^{(b, c)}$.

\item[\rm (ii)] If $ab=ba$ and $ac=ca$, then $aw, wa\in R^{(b,
c)}$, $(aw)^{(b, c)}=w^{(b, c)} a^{(b, c)}$ and $(wa)^{(b,
c)}=a^{(b, c)} w^{(b, c)}$.

%\item[(ii)$'$] If $wb=bw$ and $wc=cw$, then $aw, wa\in R^{(b,
%c)}$, $(aw)^{(b, c)}=w^{(b, c)} a^{(b, c)}$ and $(wa)^{(b,
%c)}=a^{(b, c)} w^{(b, c)}$.
\end{enumerate}
\end{cor}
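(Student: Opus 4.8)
The plan is to treat both parts as the specialization $s=c$, $t=b$ of Theorem~\ref{ke-reverse(b, c)-th1}, and to verify condition (ii) of that theorem by hand. Throughout write $p=a^{(b,c)}$ and $q=w^{(b,c)}$, and record the defining identities $pab=b$, $cap=c$, $qwb=b$, $cwq=c$. By Lemma~\ref{ke-reverse(b, c)-lem2}(ii) we moreover have $pR=qR=bR$ and $Rp=Rq=Rc$; in particular $q\in bR\cap Rc$, a fact that will be decisive in part (ii). For part (i) it then suffices to establish the pair $b=qp\,awb$ and $c=caw\,qp$. For part (ii) I must in addition treat the product $wa$: applying Theorem~\ref{ke-reverse(b, c)-th1} with $a$ and $w$ interchanged (still $s=c$, $t=b$) reduces $(wa)^{(b,c)}=pq$ to the mirror pair $b=pq\,wab$ and $c=cwa\,pq$.

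For part (i) the two identities are pure rewriting. Using $wb=bw$ to slide $b$ leftwards and then contracting with $pab=b$ and $qwb=b$ gives $qpawb=qp(abw)=q(pab)w=qbw=qwb=b$; dually, $ac=ca$ together with $cwq=c$ and $cap=c$ gives $caw\,qp=acw\,qp=a(cwq)p=acp=cap=c$. By Theorem~\ref{ke-reverse(b, c)-th1} this already yields $aw\in R^{(b,c)}$ and $(aw)^{(b,c)}=qp$. Here the commutativity of $w$ with $b$ (and of $a$ with $c$) is exactly what lets the outer factors reach their matching defining relations, so no further device is needed.

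Part (ii) is the interesting case. Since $a$ commutes with both $b$ and $c$, Lemma~\ref{ke-(b,c)-4-rem1}(i) gives $ap=pa$. Set $e=ap=pa$; from $pab=b$, $cap=c$ and $ap=pa$ one checks $eb=apb=pab=b$ and $ce=cap=c$, so $e$ acts as a \emph{left} identity on $bR$ and as a \emph{right} identity on $Rc$. Because $q\in bR\cap Rc$, this forces $eq=q$ and $qe=q$, i.e. $apq=q$ and $qpa=q$. These two collapses dispatch precisely the awkward conditions: $qpawb=(qpa)wb=qwb=b$ settles the $b$-identity for $aw$, while the $c$-identity for $aw$ follows as in part (i) from $ac=ca$; symmetrically $cwa\,pq=cw(ap)q=cw(eq)=cwq=c$ settles the $c$-identity for $wa$, and its $b$-identity $pq\,wab=(pqwb)a=pba=pab=b$ follows from $ab=ba$. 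Invoking Theorem~\ref{ke-reverse(b, c)-th1} for both products then gives $aw,wa\in R^{(b,c)}$ with $(aw)^{(b,c)}=qp$ and $(wa)^{(b,c)}=pq$.

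I expect the main obstacle to be exactly the identities $b=qpawb$ and $c=cwa\,pq$ in part (ii): the naive strategy of commuting $a$ toward $b$ or $c$ breaks down because $a$ cannot be moved past the arbitrary element $w$. The resolution is the observation above—reading $e=ap=pa$ not as ``$a$ commutes with its inverse'' but as a one-sided local identity on the ideals $bR$ and $Rc$ that both contain $q$—which replaces the impossible commutation by the harmless contractions $qpa=q$ and $apq=q$. Once this is in place everything reduces to the same routine manipulations as in part (i).
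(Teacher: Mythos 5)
Your proof is correct and follows essentially the same route as the paper: both parts are reduced to Theorem~\ref{ke-reverse(b, c)-th1} with $s=c$, $t=b$, and the delicate identity in part (ii) is handled via the commutation $a^{(b,c)}a=aa^{(b,c)}$ from Lemma~\ref{ke-(b,c)-4-rem1} together with the membership $w^{(b,c)}\in Rc$ (which the paper expresses in the equivalent form $w^{(b,c)}=w^{(b,c)}c^-c$). Your packaging of $e=a^{(b,c)}a=aa^{(b,c)}$ as a one-sided local identity on $bR$ and $Rc$ is a tidy reformulation of the same computation, and your explicit verification for $wa$ fills in what the paper leaves as ``similarly.''
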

\proof     %If we replace $a$ by $w$ meanwhile change $w$ with $a$, by (i) and (ii), we can get (i)$'$ and (ii)$'$.
(i)   Since  $wb=bw$ and $ac=ca$,  we obtain
             $$w^{(b, c)} a^{(b, c)} awb=w^{(b, c)} a^{(b, c)} a(bw)=w^{(b, c)} (a^{(b, c)} ab)w=w^{(b, c)} bw=w^{(b, c)} wb=b,$$
             $$caww^{(b, c)} a^{(b, c)}=(ac)ww^{(b, c)} a^{(b, c)}=a(cww^{(b, c)}) a^{(b, c)}=aca^{(b, c)}=caa^{(b, c)}=c.$$
             Therefore, by Theorem \ref{ke-reverse(b, c)-th1} (for $s=c$ and $t=b$), $aw\in R^{(b, c)}$ and $(aw)^{(b, c)}=w^{(b, c)} a^{(b, c)}$.

(ii) First, we will prove that $aw\in R^{(b, c)}$ and $(aw)^{(b,
c)}=w^{(b, c)} a^{(b, c)}$. By the proof of (i), the condition
$ac=ca$ gives $c=caww^{(b, c)}a^{(b, c)}$.  Note that from Lemma
\ref{ke-(b,c)-4-rem1}, if $ab=ba$ and $ac=ca$, then $a^{(b, c)}
a=aa^{(b, c)}$. Next we will prove that $w^{(b, c)} a^{(b,
c)}awb=b$. Indeed,
$$\begin{array}{ccl}
w^{(b, c)} a^{(b, c)}awb&=&(w^{(b, c)} c^-c)a^{(b, c)} awb=w^{(b, c)} c^- c (aa^{(b, c)})wb\\
&=&w^{(b, c)}c^- (caa^{(b, c)})wb=w^{(b, c)} c^-cwb\\
&=&w^{(b, c)} wb=b.\end{array}$$

Similarly, we can prove that $wa\in R^{(b, c)}$ and $(wa)^{(b, c)}=a^{(b, c)} w^{(b, c)}$.
\qed

If $b=c=d$, we have the following reverse order law for the Mary inverse.

\begin{cor}{\rm \cite[Theorem 2.14]{Zhu-Chen-Patricio-arxiv}}
Let  $a, w, d\in R$ be such that $a^{\|
d}$ and $w^{\| d}$ exist. If $ad=da$, then $aw, wa\in R^{\| d}$,
$(aw)^{\| d}=w^{\| d} a^{\| d}$ and $(wa)^{\| d}=a^{\| d} w^{\|
d}$.
     %\item[(ii)] If $wd=dw$, then $aw, wa\in R^{\| d}$, $(aw)^{\| d}=w^{\| d} a^{\| d}$ and $(wa)^{\| d}=a^{\| d} w^{\| d}$.
\end{cor}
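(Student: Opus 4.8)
The plan is to obtain this statement as an immediate specialization of the preceding corollary, exploiting the identification of the inverse along $d$ with the $(d,d)$-inverse. By \cite[Proposition 6.1]{D}, invertibility along $d$ coincides with $(b,c)$-invertibility for the choice $(b,c)=(d,d)$, and the two inverses agree, so that $a^{\|d}=a^{(d,d)}$ and $w^{\|d}=w^{(d,d)}$. Consequently, the hypothesis that $a^{\|d}$ and $w^{\|d}$ exist is precisely the hypothesis that $a^{(d,d)}$ and $w^{(d,d)}$ exist, which is what part (ii) of the preceding corollary requires.

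Next I would observe that setting $b=c=d$ collapses the two commutativity conditions of that corollary's part (ii) into the single assumption $ad=da$. Indeed, with $b=c=d$ the requirements $ab=ba$ and $ac=ca$ both read $ad=da$, which is exactly what is assumed. Therefore part (ii) applies verbatim with $b=c=d$, yielding $aw,\,wa\in R^{(d,d)}$ together with $(aw)^{(d,d)}=w^{(d,d)}a^{(d,d)}$ and $(wa)^{(d,d)}=a^{(d,d)}w^{(d,d)}$.

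Finally I would translate these conclusions back into the notation of the inverse along $d$, giving $aw,\,wa\in R^{\|d}$, $(aw)^{\|d}=w^{\|d}a^{\|d}$, and $(wa)^{\|d}=a^{\|d}w^{\|d}$, as claimed. No step here presents a genuine obstacle, since the result is a direct instance of what has already been proved; the only point demanding care is the bookkeeping of the correspondence $(b,c)=(d,d)$, namely verifying both that $R^{(d,d)}=R^{\|d}$ and that the single hypothesis $ad=da$ really does supply both commutativity relations needed to invoke part (ii).
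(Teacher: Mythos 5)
Your proposal is correct and is essentially identical to the paper's own argument: the paper states this corollary as the immediate specialization $b=c=d$ of the preceding corollary's part (ii), using exactly the identification $a^{\|d}=a^{(d,d)}$ from \cite[Proposition 6.1]{D} that you invoke. The only detail worth noting is that both commutativity hypotheses $ab=ba$ and $ac=ca$ collapse to the single assumption $ad=da$, which you have verified.
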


\medskip
Next, we consider the following reverse order law
     $$(aw)^{(b, c)}=(a^{(t, c)}aw)^{(b, s)}a^{(t, c)}.$$

\begin{thm}\label{ke-reverse(b, c)-th2}
Let $a, w, b, c, s, t\in R$ be such that $a^{(t, c)}$ and $(a^{(t,
c)}aw)^{(b, s)}$ exist. Then the following statements are
equivalent:
\begin{enumerate}
\item[\rm (i)] $aw\in R^{(b, c)}$ and $(aw)^{(b, c)}=(a^{(t,
c)}aw)^{(b, s)}a^{(t, c)}$;

\item[\rm (ii)] $c=caw(a^{(t, c)}aw)^{(b, s)}a^{(t, c)}$.
\end{enumerate}
\end{thm}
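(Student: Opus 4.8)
The plan is to imitate the proof of Theorem \ref{ke-reverse(b, c)-th1}, reducing everything to the characterization of the $(b,c)$-inverse given in Lemma \ref{ke-reverse(b, c)-lem2}(iii). Write $y=(a^{(t, c)}aw)^{(b, s)}a^{(t, c)}$ for the proposed value of $(aw)^{(b, c)}$. By Lemma \ref{ke-reverse(b, c)-lem2}, statement (i) is equivalent to the requirement that $b,c\in R^-$ and that $y$ satisfy the four identities $y=bb^-y$, $y=yc^-c$, $b=y(aw)b$ and $c=c(aw)y$; the whole argument then amounts to checking which of these hold automatically.

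First I would extract the standing facts coming from the two existence hypotheses through Lemma \ref{ke-reverse(b, c)-lem2}. Since $a^{(t, c)}$ exists, $c\in R^-$ and $a^{(t, c)}=a^{(t, c)}c^-c$. Since $(a^{(t, c)}aw)^{(b, s)}$ exists (apply the Lemma to the element $a^{(t, c)}aw$ with pair $(b,s)$), we get $b\in R^-$, the left-absorption $(a^{(t, c)}aw)^{(b, s)}=bb^-(a^{(t, c)}aw)^{(b, s)}$, and, crucially, the inner relation $b=(a^{(t, c)}aw)^{(b, s)}(a^{(t, c)}aw)b$. In particular $b,c\in R^-$ already holds.

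Next I would verify that three of the four identities are forced. The identity $y=bb^-y$ is immediate from the left-absorption of $(a^{(t, c)}aw)^{(b, s)}$. The identity $y=yc^-c$ follows from $a^{(t, c)}c^-c=a^{(t, c)}$, since then $yc^-c=(a^{(t, c)}aw)^{(b, s)}a^{(t, c)}c^-c=(a^{(t, c)}aw)^{(b, s)}a^{(t, c)}=y$. The identity $b=y(aw)b$ is nothing but the inner relation above rewritten, because $y(aw)=(a^{(t, c)}aw)^{(b, s)}a^{(t, c)}(aw)=(a^{(t, c)}aw)^{(b, s)}(a^{(t, c)}aw)$. Thus only the last identity $c=c(aw)y=caw(a^{(t, c)}aw)^{(b, s)}a^{(t, c)}$ remains, and this is precisely condition (ii). Since every inner-inverse absorption used holds for an arbitrary inner inverse (as $yR=bR$ forces $y=bb^-y$ and $Ry=Rc$ forces $y=yc^-c$), no compatibility issue between chosen inner inverses arises, and (i) and (ii) are equivalent.

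The point I expect to be the crux, and the place where this theorem departs from Theorem \ref{ke-reverse(b, c)-th1}, is the collapse of the $b$-side identity $b=y(aw)b$. In Theorem \ref{ke-reverse(b, c)-th1} this was a genuine extra condition, but here the candidate already carries the factor $a^{(t, c)}aw$ inside the $(b, s)$-inverse, so the $b$-side relation reduces exactly to the defining identity $b=(a^{(t, c)}aw)^{(b, s)}(a^{(t, c)}aw)b$ of $(a^{(t, c)}aw)^{(b, s)}$. Recognizing this algebraic cancellation is the key step; after it, only the single $c$-side kernel condition survives as a nontrivial hypothesis, which is what makes (ii) a clean necessary and sufficient condition.
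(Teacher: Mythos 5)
Your proposal is correct and follows essentially the same route as the paper: both reduce (i) via Lemma \ref{ke-reverse(b, c)-lem2}(iii) to the four identities for the candidate $y=(a^{(t, c)}aw)^{(b, s)}a^{(t, c)}$, observe that $y=bb^-y$, $y=yc^-c$ and $b=y(aw)b$ are automatic from the existence of $a^{(t,c)}$ and $(a^{(t, c)}aw)^{(b, s)}$ (the last one being exactly the defining relation $b=(a^{(t, c)}aw)^{(b, s)}(a^{(t, c)}aw)b$), and conclude that only the $c$-side identity survives as condition (ii). No gaps.
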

\proof Using Lemma \ref{ke-reverse(b, c)-lem2}, $aw\in R^{(b, c)}$ and $(aw)^{(b, c)}=(a^{(t, c)}aw)^{(b, s)}a^{(t, c)}$ hold if and only if
$$b, c\in R^-, \quad (a^{(t, c)}aw)^{(b, s)}a^{(t, c)}=bb^-(a^{(t, c)}aw)^{(b, s)}a^{(t, c)}=(a^{(t, c)}aw)^{(b, s)}a^{(t, c)}c^-c,$$
and $$b=(a^{(t, c)}aw)^{(b, s)}a^{(t, c)}awb, \quad c=caw(a^{(t, c)}aw)^{(b, s)}a^{(t, c)}.$$
   Since $a^{(t, c)}$ and $(a^{(t, c)}aw)^{(b, s)}$ exist, again using Lemma \ref{ke-reverse(b, c)-lem2}, we have  $b, c\in R^-$,
   $$(a^{(t, c)}aw)^{(b, s)}=bb^-(a^{(t, c)}aw)^{(b, s)}, \quad a^{(t, c)}=a^{(t, c)}c^-c,\quad b=(a^{(t, c)}aw)^{(b, s)}a^{(t, c)}awb.$$
   Therefore, (i) is equivalent to (ii).
\qed

\begin{cor}
Let $a, w, b, c, s, t\in R$ be such that $a^{(t, c)}$ and $(a^{(t, c)}aw)^{(b, s)}$ exist.
\begin{enumerate}
\item[\rm (i)]If $Ra\subseteq Rs$, then $aw\in R^{(b, c)}$ and
$(aw)^{(b, c)}=(a^{(t, c)}aw)^{(b, s)}a^{(t, c)}$.

\item[\rm (ii)]If $aw \in R^{(b, c)}$  and $(a^{(t, c)}aw)^{(b,
s)}=(aw)^{(b, c)}a$, then $(aw)^{(b, c)}=(a^{(t, c)}aw)^{(b,
s)}a^{(t, c)}$.
\end{enumerate}
\end{cor}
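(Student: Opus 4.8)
The plan is to derive both parts from Theorem~\ref{ke-reverse(b, c)-th2} by checking its condition (ii), namely $c=caw(a^{(t,c)}aw)^{(b,s)}a^{(t,c)}$; once this holds, the theorem immediately gives the asserted equality. Write $z=(a^{(t,c)}aw)^{(b,s)}$ throughout. From Lemma~\ref{ke-reverse(b, c)-lem2} applied to the existing inverse $a^{(t,c)}$ I record $caa^{(t,c)}=c$ and $a^{(t,c)}=a^{(t,c)}c^-c$; applied to $z$ it gives $z=zs^-s$ together with the relation $s=s(a^{(t,c)}aw)z$, which is the identity $c=cay$ of that lemma read with $c\to s$, $a\to a^{(t,c)}aw$ and $y\to z$. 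Extracting this last relation in exactly the right form is the one delicate point of the setup.

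For part (i), the hypotheses $Ra\subseteq Rs$ and $s\in R^-$ yield $a=as^-s$, hence $ca=cas^-s$. The heart of the proof is a single substitution chain performed in the correct order: feed $s=s(a^{(t,c)}aw)z$ into the right-hand copy of $s$, then collapse the remaining $cas^-s$ to $ca$, and finally use $caa^{(t,c)}=c$, giving
$$ca=cas^-s=cas^-\,s(a^{(t,c)}aw)z=ca\,(a^{(t,c)}aw)z=(caa^{(t,c)})awz=cawz.$$
Right-multiplying by $a^{(t,c)}$ and invoking $caa^{(t,c)}=c$ once more produces $c=caa^{(t,c)}=cawza^{(t,c)}$, which is condition (ii). I expect the main obstacle to be purely organizational rather than conceptual: if one substitutes $s=s(a^{(t,c)}aw)z$ into the wrong $s$, or collapses $cas^-s=ca$ before substituting, the computation degenerates into the trivial identity $ca=ca$. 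One must therefore insert the defining relation into the free right factor and then use $caa^{(t,c)}=c$ to remove the spurious $a^{(t,c)}a$.

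For part (ii), put $g=(aw)^{(b,c)}$, so the hypothesis reads $z=ga$. Since $aw\in R^{(b,c)}$, the defining relation $c(aw)g=c$ gives $cawg=c$, and then
$$cawza^{(t,c)}=caw(ga)a^{(t,c)}=(cawg)(aa^{(t,c)})=c\,aa^{(t,c)}=caa^{(t,c)}=c,$$
which is once more condition (ii) of Theorem~\ref{ke-reverse(b, c)-th2}; the theorem then yields $(aw)^{(b,c)}=(a^{(t,c)}aw)^{(b,s)}a^{(t,c)}$. This part needs no new idea once $cawg=c$ is recalled, so I anticipate only routine bookkeeping here.
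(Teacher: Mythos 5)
Your proof is correct and follows essentially the same route as the paper: both parts reduce to verifying condition (ii) of Theorem~\ref{ke-reverse(b, c)-th2}, using $a=as^-s$ together with $s=s(a^{(t,c)}aw)(a^{(t,c)}aw)^{(b,s)}$ and $caa^{(t,c)}=c$ for (i), and $caw(aw)^{(b,c)}=c$ for (ii). The only difference is cosmetic: you derive $ca=caw(a^{(t,c)}aw)^{(b,s)}$ first and then right-multiply by $a^{(t,c)}$, whereas the paper manipulates the full expression $caw(a^{(t,c)}aw)^{(b,s)}a^{(t,c)}$ directly.
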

\proof
   (i) Similar discuss as Corollary \ref{ke-reverse(b, c)-cor1}, we get $a=as^-s$. As $a^{(t, c)}$ and $(a^{(t, c)}aw)^{(b, s)}$ exist, we know that $caa^{(t, c)}=c$ and $sa^{(t, c)}aw(a^{(t, c)}aw)^{(b, s)}=s$. Therefore,
    $$\begin{array}{ccl}
    caw(a^{(t, c)}aw)^{(b, s)}a^{(t, c)}&=&(caa^{(t, c)})aw(a^{(t, c)}aw)^{(b, s)}a^{(t, c)}\\
    &=&c(as^-s)a^{(t, c)}aw(a^{(t, c)}aw)^{(b, s)}a^{(t, c)}\\
    &=&cas^-(sa^{(t, c)}aw(a^{(t, c)}aw)^{(b, s)})a^{(t, c)}\\
    &=&cas^-sa^{(t, c)}=caa^{(t, c)}=c.
    \end{array}$$
    Using Theorem \ref{ke-reverse(b, c)-th2}, $aw\in R^{(b, c)}$ and $(aw)^{(b, c)}=(a^{(t, c)}aw)^{(b, s)}a^{(t, c)}$.

  (ii) Since $a^{(t, c)}$ and $(aw)^{(b, c)}$ exist, we get $caa^{(t, c)}=c$ and $caw(aw)^{(b, c)}=c$. So the hypothesis  $(a^{(t, c)}aw)^{(b, s)}=(aw)^{(b, c)}a$ gives
       $$caw(a^{(t, c)}aw)^{(b, s)}a^{(t, c)}=caw((aw)^{(b, c)}a)a^{(t, c)}=(caw(aw)^{(b, c)})aa^{(t, c)}=caa^{(t, c)}=c.$$
Hence, by Theorem \ref{ke-reverse(b, c)-th2}, $(aw)^{(b, c)}=(a^{(t, c)}aw)^{(b, s)}a^{(t, c)}$.
\qed

\medskip
In a similar way we can prove the related results for $(aw)^{(b, c)}=w^{(b, s)}(aww^{(b, s)})^{(t, c)}$.

\begin{thm}\label{ke-reverse(b, c)-th3}
Let $a, w, b, c, s, t\in R$ be such that $(aww^{(b, s)})^{(t, c)}$ and $w^{(b, s)}$ exist. Then the following are equivalent:
\begin{enumerate}
\item[\rm (i)] $aw\in R^{(b, c)}$ and $(aw)^{(b, c)}=w^{(b,
s)}(aww^{(b, s)})^{(t, c)}$;

\item[\rm (ii)] $b=w^{(b, s)}(aww^{(b, s)})^{(t, c)}awb$.
\end{enumerate}
\end{thm}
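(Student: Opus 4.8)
The plan is to apply the characterization in Lemma~\ref{ke-reverse(b, c)-lem2}(iii) to the candidate element $y=w^{(b, s)}(aww^{(b, s)})^{(t, c)}$, exactly as in the proofs of Theorems~\ref{ke-reverse(b, c)-th1} and~\ref{ke-reverse(b, c)-th2}. By that lemma, the assertion ``$aw\in R^{(b, c)}$ with $(aw)^{(b, c)}=y$'' is equivalent to the conjunction $b, c\in R^-$, $y=bb^-y=yc^-c$, $b=y(aw)b$, and $c=c(aw)y$. The strategy is to show that, under the standing hypothesis that $w^{(b, s)}$ and $(aww^{(b, s)})^{(t, c)}$ exist, every requirement on this list except $b=y(aw)b$ is automatically satisfied, so that the whole characterization collapses to condition (ii).

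First I would record the structural data supplied by the two assumed inverses. Applying Lemma~\ref{ke-reverse(b, c)-lem2}(iii) to $w^{(b, s)}$ yields $b, s\in R^-$ together with $w^{(b, s)}=bb^-w^{(b, s)}$, while applying it to $(aww^{(b, s)})^{(t, c)}$ yields $t, c\in R^-$ together with $(aww^{(b, s)})^{(t, c)}=(aww^{(b, s)})^{(t, c)}c^-c$ and the outer relation $c=c(aww^{(b, s)})(aww^{(b, s)})^{(t, c)}$. In particular $b, c\in R^-$ holds, so the first requirement is met at once.

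Using these absorption identities, the two idempotent conditions on $y$ follow directly: left-multiplying by $bb^-$ is absorbed into $w^{(b, s)}$, giving $bb^-y=y$, and right-multiplying by $c^-c$ is absorbed into $(aww^{(b, s)})^{(t, c)}$, giving $yc^-c=y$. The condition $c=c(aw)y$ is where the grouping matters: writing $y=w^{(b, s)}(aww^{(b, s)})^{(t, c)}$ and regrouping gives $c(aw)y=c(aww^{(b, s)})(aww^{(b, s)})^{(t, c)}$, which equals $c$ precisely by the outer relation recorded above. Hence the only surviving requirement is $b=y(aw)b=w^{(b, s)}(aww^{(b, s)})^{(t, c)}awb$, which is verbatim condition (ii), establishing the equivalence.

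The one place demanding care is the regrouping $c(aw)w^{(b, s)}=c(aww^{(b, s)})$ in the verification of $c=c(aw)y$: one must read $aww^{(b, s)}$ as the single element whose $(t, c)$-inverse is assumed to exist, so that the $(t, c)$-outer relation applies to it directly. Everything else is routine bookkeeping of the idempotent factors $bb^-$ and $c^-c$, and I expect no genuine obstacle beyond keeping that factorization in view.
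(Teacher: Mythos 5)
Your argument is correct and is precisely the proof the paper intends: the paper omits it, remarking only that it follows ``in a similar way'' to Theorem~\ref{ke-reverse(b, c)-th2}, and your dual bookkeeping via Lemma~\ref{ke-reverse(b, c)-lem2}(iii) --- absorbing $bb^-$ into $w^{(b,s)}$, absorbing $c^-c$ into $(aww^{(b,s)})^{(t,c)}$, and discharging $c=c(aw)y$ through the outer relation $c=c(aww^{(b,s)})(aww^{(b,s)})^{(t,c)}$ --- is exactly that analogous argument. No gaps.
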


\begin{cor}
Let $a, w, b, c, s, t\in R$ be such that $(aww^{(b, s)})^{(t, c)}$ and $w^{(b, s)}$ exist.
\begin{enumerate}
\item[\rm (i)] If $wR\subseteq tR$, then $aw\in R^{(b, c)}$ and
$(aw)^{(b, c)}=w^{(b, s)}(aww^{(b, s)})^{(t, c)}$.

\item[\rm (ii)] If $aw\in R^{(b, c)}$ and $(aww^{(b, s)})^{(t,
c)}=w(aw)^{(b, c)}$, then $(aw)^{(b, c)}=w^{(b, s)}(aww^{(b,
s)})^{(t, c)}$.
\end{enumerate}
\end{cor}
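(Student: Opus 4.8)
The plan is to derive both parts from Theorem~\ref{ke-reverse(b, c)-th3}, whose condition~(ii) reduces the whole reverse order law to the single identity \(b = w^{(b, s)}(aww^{(b, s)})^{(t, c)}awb\). Throughout I write \(p = w^{(b, s)}\) and \(q = (aww^{(b, s)})^{(t, c)} = (awp)^{(t, c)}\), so the goal in each part is just to verify \(b = pq\,aw\,b\). First I would record, via Lemma~\ref{ke-reverse(b, c)-lem2}(iii), the absorption identities attached to the two assumed inverses: from \(p = w^{(b,s)}\) one gets \(b = pwb\) (and \(s,b\in R^-\)), and from \(q = (awp)^{(t,c)}\) one gets the ``\(b=yab\)''-type relation \(t = q(awp)t\) together with \(t, c\in R^-\). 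This setting is the left/right dual of the corollary following Theorem~\ref{ke-reverse(b, c)-th2}, so the computations should mirror those with the sides reversed.

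For part~(i), since \(q\) exists we have \(t\in R^-\); arguing as in Corollary~\ref{ke-reverse(b, c)-cor1}, the hypothesis \(wR\subseteq tR\) yields a factorization \(w = tv\), whence \(w = tt^- w\) (using \(tt^- t = t\)). I would then run the chain
\[
b = pwb = p(tt^- w)b = p\bigl(q\,awp\,tt^-\bigr)wb = pq\,awp\,(tt^- w)b = pq\,aw(pwb) = pq\,aw\,b,
\]
where the key replacement \(tt^- = q\,awp\,tt^-\) comes from right-multiplying \(t = q(awp)t\) by \(t^-\), and the last two steps reuse \(tt^- w = w\) and \(pwb = b\). This is exactly condition~(ii) of Theorem~\ref{ke-reverse(b, c)-th3}, so the conclusion follows.

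For part~(ii), set \(r = (aw)^{(b, c)}\), which exists by hypothesis, so that \(b = r(aw)b\). The hypothesis \((aww^{(b, s)})^{(t, c)} = w(aw)^{(b, c)}\) reads \(q = wr\), and then
\[
pq\,aw\,b = p(wr)aw\,b = pw\,(r\,aw\,b) = pw\,b = b,
\]
using \(r(aw)b = b\) followed by \(pwb = b\). Again condition~(ii) of Theorem~\ref{ke-reverse(b, c)-th3} holds, giving the desired reverse order law.

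The only genuinely delicate point—more bookkeeping than obstacle—is getting the direction of the ideal hypothesis right: one must notice that the right-ideal inclusion \(wR\subseteq tR\) together with \(t\in R^-\) produces the \emph{left} absorption \(w = tt^- w\), and that the identity to exploit for \(q\) is the ``\(b=yab\)''-type relation \(t = q(awp)t\) rather than the ``\(c=cay\)''-type relation \(c = c(awp)q\). Once these are paired correctly, both parts are short formal manipulations requiring no case analysis.
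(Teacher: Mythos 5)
Your proof is correct and follows exactly the route the paper intends: the paper states this corollary without proof as the left/right dual of the corollary following Theorem \ref{ke-reverse(b, c)-th2}, and your verification of condition (ii) of Theorem \ref{ke-reverse(b, c)-th3} via $w=tt^-w$ and $tt^-=q(awp)tt^-$ in part (i), and via $q=wr$ with $b=r(aw)b$ in part (ii), is precisely that mirrored argument. All the absorption identities you invoke are the correct ones from Lemma \ref{ke-reverse(b, c)-lem2}, so nothing is missing.
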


Also, if $s=c$ and $t=b$, we get the following result.
%\begin{prop}\label{ke-reverse(b, c)-prop2}
%Let $a, w, b, c\in R$.
%\begin{enumerate}
%\item[(i)] If $a^{(b, c)}$ and $(a^{(b, c)}aw)^{(b, c)}$ exist, then $aw\in R^{(b, c)}$ and $(aw)^{(b, c)}=(a^{(b, c)}aw)^{(b, c)}a^{(b, c)}$ hold if and only if $c=caw(a^{(b, c)}aw)^{(b, c)}a^{(b, c)}$.
%\item[(ii)] If $(aww^{(b, c)})^{(b, c)}$ and $w^{(b, c)}$ exist, then
%    $aw\in R^{(b, c)}$ and $(aw)^{(b, c)}=w^{(b, c)}(aww^{(b, c)})^{(b, c)}$ hold if and only if $b=w^{(b, c)}(aww^{(b, c)})^{(b, c)}awb$.
%\end{enumerate}
%\end{prop}
%
%
%
%\begin{cor}
%Let $a, w, b, c\in R$.
%\begin{enumerate}
%\item[(i)] If $a^{(b, c)}$ and $(a^{(b, c)}aw)^{(b, c)}$ exist,  and one of the following statements hold:
%    \begin{enumerate}
%     \item[(1)] $Ra\subseteq Rc$;
%     \item[(2)] $(aw)^{(b, c)}$ exists and $(a^{(b, c)}aw)^{(b, c)}=(aw)^{(b, c)}a$;
%     \end{enumerate}
%     then $aw\in R^{(b, c)}$ and $(aw)^{(b, c)}=(a^{(b, c)}aw)^{(b, c)}a^{(b, c)}$.
%\item[(ii)] If $(aww^{(b, c)})^{(b, c)}$ and $w^{(b, c)}$ exist,
%  and one of the following statements hold:
%    \begin{enumerate}
%     \item[(3)] $wR\subseteq bR$;
%     \item[(4)] $aw\in R^{(b, c)}$ and $(aww^{(b, s)})^{(t, c)}=w(aw)^{(b, c)}$;
%     \end{enumerate}
%     then  $aw\in R^{(b, c)}$ and $(aw)^{(b, c)}=w^{(b, c)}(aww^{(b, c)})^{(b, c)}$.
%\end{enumerate}
%\end{cor}

\begin{cor}
Let $a, w, b, c\in R$.
\begin{enumerate}
\item[\rm (i)] Suppose that $a^{(b, c)}$ and $(a^{(b, c)}aw)^{(b,
c)}$ exist. If  $ac=ca$, then $aw\in R^{(b, c)}$ and $(aw)^{(b,
c)}=(a^{(b, c)}aw)^{(b, c)}a^{(b, c)}$.

\item[\rm (ii)] Suppose that $(aww^{(b, c)})^{(b, c)}$ and $w^{(b,
c)}$ exist. If $wb=bw$,
     then  $aw\in R^{(b, c)}$ and $(aw)^{(b, c)}=w^{(b, c)}(aww^{(b, c)})^{(b, c)}$.
\end{enumerate}
\end{cor}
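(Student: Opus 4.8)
The plan is to derive both parts directly from the earlier theorems by specializing $s=c$ and $t=b$, and then feeding in the commutativity hypotheses to verify the single equation each theorem requires.

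For part (i), I would invoke Theorem \ref{ke-reverse(b, c)-th2} with $s=c$ and $t=b$. That theorem tells me that, since $a^{(b,c)}$ and $(a^{(b,c)}aw)^{(b,c)}$ exist, the conclusion $(aw)^{(b,c)}=(a^{(b,c)}aw)^{(b,c)}a^{(b,c)}$ holds if and only if the single identity $c=caw(a^{(b,c)}aw)^{(b,c)}a^{(b,c)}$ is satisfied. So the whole task reduces to checking this one equation from $ac=ca$. The key structural fact I would exploit is that $a^{(b,c)}$ exists implies $caa^{(b,c)}=c$ (from Lemma \ref{ke-reverse(b, c)-lem2}, since $c=cay$ with $y=a^{(b,c)}$, combined with $a^{(b,c)}=a^{(b,c)}c^-c$). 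The natural move is to insert $a^{(b,c)}$ to the left using the commutation $ac=ca$: I expect to rewrite $caw(\cdots)a^{(b,c)}$ by pulling an $a$ through $c$ so that a factor of the form $caa^{(b,c)}=c$ or $aa^{(b,c)}a=a$ collapses, exactly as in the proof of Corollary \ref{ke-reverse(b, c)-cor1}(i) where $ac=ca$ was used to obtain $c=caww^{(b,s)}a^{(t,c)}$. Writing $c=caa^{(b,c)}$ at the front and then commuting should let the inner $(a^{(b,c)}aw)^{(b,c)}$ act against a genuine $aw$ term, telescoping back down to $c$.

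For part (ii) I would use the dual theorem, Theorem \ref{ke-reverse(b, c)-th3}, again with $s=c$ and $t=b$. There the criterion for $(aw)^{(b,c)}=w^{(b,c)}(aww^{(b,c)})^{(b,c)}$ is the single equation $b=w^{(b,c)}(aww^{(b,c)})^{(b,c)}awb$. Now the relevant structural identity is $w^{(b,c)}w b = b$ together with $bb^-w^{(b,c)}=w^{(b,c)}$, coming from $b=yaw\cdot b$-type relations and $w^{(b,c)}=w^{(b,c)}w w^{(b,c)}$; more usefully, $w^{(b,c)}wb=b$ follows from $b=w^{(b,c)}wb$ in Lemma \ref{ke-reverse(b, c)-lem2}(iii). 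Here the hypothesis $wb=bw$ is the lever: I would commute $w$ past $b$ so that a factor $w^{(b,c)}w$ or $ww^{(b,c)}$ reduces. Concretely, I expect to start from $b=w^{(b,c)}wb$, replace the trailing $wb$ by $bw$, and then recognize $(aww^{(b,c)})^{(b,c)}aw$ acting on the idempotent-like element so that the expression collapses, mirroring the computation in Corollary proof that used $wb=bw$ to get $b=w^{(b,s)}a^{(t,c)}awb$.

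The main obstacle I anticipate is the bookkeeping of which outer-inverse absorption identity applies at each factor: the proofs hinge on correctly recognizing when a product such as $aa^{(b,c)}a=a$, $caa^{(b,c)}=c$, $w^{(b,c)}ww^{(b,c)}=w^{(b,c)}$, or $w^{(b,c)}wb=b$ becomes available after a single use of the commutation hypothesis. These are routine ring manipulations once the right identity is spotted, so the difficulty is purely in ordering the substitutions; there should be no genuinely hard step, since both conclusions are single-equation verifications handed to us by Theorems \ref{ke-reverse(b, c)-th2} and \ref{ke-reverse(b, c)-th3}.
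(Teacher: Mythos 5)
Your proposal follows the paper's proof essentially verbatim: specialize Theorems \ref{ke-reverse(b, c)-th2} and \ref{ke-reverse(b, c)-th3} to $s=c$, $t=b$, then verify the single remaining equation by inserting $c=caa^{(b,c)}$ (resp.\ $b=w^{(b,c)}wb$), commuting via $ac=ca$ (resp.\ $wb=bw$), and collapsing with the defining identity $c(a^{(b,c)}aw)(a^{(b,c)}aw)^{(b,c)}=c$ (resp.\ $(aww^{(b,c)})^{(b,c)}(aww^{(b,c)})b=b$). One small caution: the identity $aa^{(b,c)}a=a$ that you list as a possible fallback is not available in general ($(b,c)$-inverses are only outer inverses), but your main line correctly relies on $caa^{(b,c)}=c$ instead, so nothing breaks.
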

\proof (i) Since $a^{(b, c)}$ and $(a^{(b, c)}aw)^{(b, c)}$ exist,
we get $caa^{(b, c)}=c=c(a^{(b, c)}aw)(a^{(b, c)}aw)^{(b, c)}.$ If
$ac=ca$, then
 $$\begin{array}{ccl}
 caw(a^{(b, c)}aw)^{(b, c)}a^{(b, c)}&=&(caa^{(b, c)})aw(a^{(b, c)}aw)^{(b, c)}a^{(b, c)}\\
 &=&aca^{(b, c)}aw(a^{(b, c)}aw)^{(b, c)}a^{(b, c)}\\
 &=&aca^{(b, c)}=caa^{(b, c)}=c.
 \end{array}$$
By Theorem \ref{ke-reverse(b, c)-th2} (for $s=c$ and $t=b$),
$aw\in R^{(b, c)}$ and $(aw)^{(b, c)}=(a^{(b, c)}aw)^{(b,
c)}a^{(b, c)}$.

(ii) Similarly as (i). \qed

\begin{cor}
Let $a, w, b, c\in R$.
\begin{enumerate}
\item[\rm (i)] Suppose that $a^{\| d}$ and $(a^{\| d}aw)^{\| d}$
exist. If  $ad=da$, then $aw\in R^{\| d}$ and $(aw)^{\| d}=(a^{\|
d}aw)^{\| d}a^{\| d}$.

\item[\rm (ii)] Suppose that $(aww^{\| d})^{\| d}$ and $w^{\| d}$
exist. If $wd=dw$,
     then  $aw\in R^{\| d}$ and $(aw)^{\| d}=w^{\| d}(aww^{\| d})^{\| d}$.
\end{enumerate}
\end{cor}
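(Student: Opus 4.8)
The plan is to observe that, as recalled in the introduction immediately after \cite[Proposition 6.1]{D}, the inverse along $d$ is nothing but the $(b,c)$-inverse in the special case $(b,c)=(d,d)$; that is, $a^{\| d}=a^{(d,d)}$ and $R^{\| d}=R^{(d,d)}$ whenever either side is defined. Consequently, both parts of this corollary should follow as immediate specializations of the preceding corollary, obtained by setting $b=c=d$ throughout.

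First I would treat part (i). Putting $b=c=d$ in part (i) of the preceding corollary, the hypotheses that $a^{(b,c)}$ and $(a^{(b,c)}aw)^{(b,c)}$ exist become precisely the existence of $a^{\| d}$ and $(a^{\| d}aw)^{\| d}$, while the commutativity assumption $ac=ca$ collapses to the single condition $ad=da$. The conclusion $aw\in R^{(b,c)}$ with $(aw)^{(b,c)}=(a^{(b,c)}aw)^{(b,c)}a^{(b,c)}$ then reads $aw\in R^{\| d}$ and $(aw)^{\| d}=(a^{\| d}aw)^{\| d}a^{\| d}$, which is exactly the claim.

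Next I would handle part (ii) in the same fashion. Setting $b=c=d$ in part (ii) of the preceding corollary, the existence of $(aww^{(b,c)})^{(b,c)}$ and $w^{(b,c)}$ becomes the existence of $(aww^{\| d})^{\| d}$ and $w^{\| d}$, and the hypothesis $wb=bw$ reduces to $wd=dw$. The conclusion then specializes directly to $aw\in R^{\| d}$ and $(aw)^{\| d}=w^{\| d}(aww^{\| d})^{\| d}$, as required.

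I do not expect any genuine obstacle: the whole content of the argument is the identification $a^{\| d}=a^{(d,d)}$, after which the statement is a verbatim substitution into a result already proved. The only point demanding a little care is to replace each occurrence of $b$ and $c$ in the preceding corollary \emph{consistently} by $d$, in particular inside the composite expressions $a^{(b,c)}aw$ and $aww^{(b,c)}$, so that the two commutativity hypotheses genuinely collapse to $ad=da$ in (i) and to $wd=dw$ in (ii), respectively.
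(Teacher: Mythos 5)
Your proposal is correct and matches the paper's intent exactly: the paper states this corollary without proof precisely because it is the specialization $b=c=d$ of the preceding corollary, using the identification $a^{\| d}=a^{(d,d)}$ recorded in the introduction (via \cite[Proposition 6.1]{D}). Your careful note about substituting $b$ and $c$ consistently, so that $ac=ca$ and $wb=bw$ collapse to $ad=da$ and $wd=dw$ respectively, is the only point of substance and you have it right.
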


\medskip
Now the following reverse order law  is studied
$$(a^{(t, c)}aw)^{(b, u)}=w^{(b, s)}a^{(t, c)}a.$$

\begin{thm}\label{ke-reverse(b, c)-th4}
Let $a, w, b, c, s, t, u\in R$ be such that $a^{(t, c)}$ and
$w^{(b, s)}$ exist. Then the following statements are equivalent:
\begin{enumerate}
\item[\rm (i)] $a^{(t, c)}aw\in R^{(b, u)}$ and $(a^{(t,
c)}aw)^{(b, u)}=w^{(b, s)}a^{(t, c)}a$;

\item[\rm (ii)] $u\in R^-$, $w^{(b, s)}a^{(t, c)}a(1-uu^-)=0$,
$b=w^{(b, s)}a^{(t, c)}awb$ and $c=ca^{(t, c)}aww^{(b, s)}a^{(t,
c)}a.$
\end{enumerate}
\end{thm}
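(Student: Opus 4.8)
The plan is to apply Lemma~\ref{ke-reverse(b, c)-lem2}, in the form of its third equivalent condition, to the element $a^{(t,c)}aw$ relative to the prescribed pair $(b,u)$, using $y:=w^{(b,s)}a^{(t,c)}a$ as the candidate $(b,u)$-inverse. First I would record the standing identities coming from the assumed existence of $a^{(t,c)}$ and $w^{(b,s)}$: by Lemma~\ref{ke-reverse(b, c)-lem2} one has $b,c,s,t\in R^{-}$ together with $c=caa^{(t,c)}$, $a^{(t,c)}=a^{(t,c)}c^{-}c$, $w^{(b,s)}=bb^{-}w^{(b,s)}$ and $b=w^{(b,s)}wb$. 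The single algebraic fact that powers every subsequent cancellation is that $p:=a^{(t,c)}a$ is idempotent with $pa^{(t,c)}=a^{(t,c)}$, since $a^{(t,c)}aa^{(t,c)}=a^{(t,c)}$ (an outer-inverse identity).

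By Lemma~\ref{ke-reverse(b, c)-lem2}(iii), applied with inverted element $a^{(t,c)}aw$ and pair $(b,u)$, statement (i) is equivalent to the conjunction: $b,u\in R^{-}$, $y=bb^{-}y$, $y=yu^{-}u$, $b=y(a^{(t,c)}aw)b$ and $u=u(a^{(t,c)}aw)y$. I would first clear the image-side ($b$) requirements. Regularity of $b$ is already available, and $w^{(b,s)}=bb^{-}w^{(b,s)}$ gives $bb^{-}y=bb^{-}w^{(b,s)}a^{(t,c)}a=y$, so $y=bb^{-}y$ is automatic. Using $p^{2}=p$ the product telescopes, $y(a^{(t,c)}aw)=w^{(b,s)}\,a^{(t,c)}a\,a^{(t,c)}a\,w=w^{(b,s)}a^{(t,c)}aw$, whence the condition $b=y(a^{(t,c)}aw)b$ becomes precisely $b=w^{(b,s)}a^{(t,c)}awb$, the third identity in (ii).

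What remains are the two kernel-side ($u$) conditions, and this is where I expect the genuine work to lie. The relation $y=yu^{-}u$ rewrites as the vanishing statement $w^{(b,s)}a^{(t,c)}a(1-u^{-}u)=0$, matching the second condition in (ii), while $u=u(a^{(t,c)}aw)y=u\,a^{(t,c)}aww^{(b,s)}a^{(t,c)}a$ is the last identity, carried by the factor $M:=a^{(t,c)}aww^{(b,s)}a^{(t,c)}a$. The delicate point is to reconcile this identity with the form recorded in (ii): the two kernel conditions together force $Ru=Ry$ (from $u=u(\cdot)y$ one gets $Ru\subseteq Ry$, and $y=yu^{-}u$ gives $Ry\subseteq Ru$), so the problem reduces to comparing the left ideals $Ru$, $Ry=R\,w^{(b,s)}a^{(t,c)}a$ and $Rc=Ra^{(t,c)}$. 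I would isolate exactly this ideal comparison, derive it from the defining relations of the $(b,c)$-inverses (in particular $c=caa^{(t,c)}$ and $Ra^{(t,c)}=Rc$), and only then rewrite the left factor to reach the stated form $c=c\,a^{(t,c)}aww^{(b,s)}a^{(t,c)}a$.

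Finally, since each reduction used only the standing identities, the idempotency of $p$, and the regularity of $b$ and $u$, every step is reversible; reading the chain of equivalences backwards yields (ii)$\Rightarrow$(i) with no extra effort. Thus the whole statement is an instance of Lemma~\ref{ke-reverse(b, c)-lem2} once the image-side conditions are absorbed and the kernel-side conditions are put in normal form, the latter being the only non-mechanical ingredient.
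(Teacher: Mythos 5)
Your overall route is the same as the paper's: both arguments apply Lemma~\ref{ke-reverse(b, c)-lem2}(iii) to the element $a^{(t,c)}aw$ with prescribed pair $(b,u)$ and candidate inverse $y=w^{(b,s)}a^{(t,c)}a$, and both dispose of the image-side conditions exactly as you do, via $w^{(b,s)}=bb^{-}w^{(b,s)}$ and $a^{(t,c)}aa^{(t,c)}=a^{(t,c)}$. Up to that point your reductions are correct and complete.

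The gap sits exactly where you announce that ``the genuine work'' lies, and you do not do that work. Lemma~\ref{ke-reverse(b, c)-lem2}(iii), read for the pair $(b,u)$, delivers the kernel-side conditions $y=yu^{-}u$ and $u=u(a^{(t,c)}aw)y$, whereas statement (ii) records $y(1-uu^{-})=0$ and $c=c\,a^{(t,c)}aww^{(b,s)}a^{(t,c)}a$. You notice the mismatch and propose to bridge it by an ideal comparison; your observation that the two lemma conditions together force $Ru=Ry$ is correct, but to convert $u=uM$ into $c=cM$ (with $M=a^{(t,c)}aww^{(b,s)}a^{(t,c)}a$) one would need $Rc\subseteq Ru=R\,w^{(b,s)}a^{(t,c)}a$, and the standing hypotheses only give $Rc=Ra^{(t,c)}$, which need not lie inside $Ra^{(t,c)}a$. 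Likewise $y=yu^{-}u$ and $y=yuu^{-}$ are genuinely different conditions in a noncommutative ring. So the ``normal form'' step you defer is not routine, and I do not see how to carry it out from the stated hypotheses; indeed, for the implication (ii)$\Rightarrow$(i) the printed condition $c=cM$ places no constraint forcing $Ru\subseteq Ry$ (take $u=1$ to see the problem), whereas $u=uM$ does. For what it is worth, the paper's own proof does not perform this reconciliation either: it simply lists $c=ca^{(t,c)}aww^{(b,s)}a^{(t,c)}a$ and the factor $uu^{-}$ as though these were what Lemma~\ref{ke-reverse(b, c)-lem2} produces (contrast Theorem~\ref{ke-reverse(b, c)-th1}, where the same lemma is applied to the pair $(b,c)$ and the conditions do come out as printed). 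You have therefore put your finger on a real soft spot, but your proposal is not a proof: either establish the equivalence of $u=uM$ with $c=cM$ (and of the two $u$-regularity conditions), or restate (ii) with $u$ in place of $c$ and $u^{-}u$ in place of $uu^{-}$, after which the whole theorem reduces to the mechanical application you began with.
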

\proof
     Using Lemma \ref{ke-reverse(b, c)-lem2}, (i) holds if and only if $b, u\in R^-$,
     $$w^{(b, s)}a^{(t, c)}a=bb^-w^{(b, s)}a^{(t, c)}a=w^{(b, s)}a^{(t, c)}auu^-,$$
     $$b=w^{(b, s)}a^{(t, c)}aa^{(t, c)}awb, \quad \mbox{and} \quad c=ca^{(t, c)}aww^{(b, s)}a^{(t, c)}a.$$
     Since $a^{(t, c)}$ and $w^{(b, s)}$ exist, it follows that $b\in R^-$, $bb^-w^{(b, s)}=w^{(b, s)}$ and $a^{(t, c)}aa^{(t, c)}=a^{(t, c)}$. Therefore, (i) is equivalent to (ii).
\qed

\medskip
Using Theorem \ref{ke-reverse(b, c)-th4}, we get some sufficient
conditions for $(aw)^{(b, c)}=w^{(b, s)}a^{(t, c)}.$

\begin{cor}\label{ke-reverse(b, c)-cor4}
Let $a, w, b, c, s, t, u\in R$ be such that $a^{(t, c)}$, $w^{(b, s)}$, $(aw)^{(b, c)}$ and $(a^{(t, c)}aw)^{(b, u)}$ exist. If $(aw)^{(b, c)}=(a^{(t, c)}aw)^{(b, u)}a^{(t, c)}$ and one of the following equivalent statements holds:
\begin{enumerate}
\item[\rm (i)]  $(a^{(t, c)}aw)^{(b, u)}=w^{(b, s)}a^{(t, c)}a;$
\item[\rm (ii)] $w^{(b, s)}a^{(t, c)}a(1-uu^-)=0$, $b=w^{(b,
s)}a^{(t, c)}awb$ and $c=ca^{(t, c)}aww^{(b, s)}a^{(t, c)}a,$
\end{enumerate}
then $(aw)^{(b, c)}=w^{(b, s)}a^{(t, c)}.$
\end{cor}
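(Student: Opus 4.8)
Looking at this corollary, I need to understand what's being claimed and how the pieces fit together.

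Let me analyze the statement. We have elements with $a^{(t,c)}$, $w^{(b,s)}$, $(aw)^{(b,c)}$, and $(a^{(t,c)}aw)^{(b,u)}$ all existing. The hypothesis is $(aw)^{(b,c)}=(a^{(t,c)}aw)^{(b,u)}a^{(t,c)}$, plus one of two equivalent conditions (i) or (ii). The conclusion is $(aw)^{(b,c)}=w^{(b,s)}a^{(t,c)}$.

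First, conditions (i) and (ii) being equivalent is exactly Theorem 2.7 (ke-reverse(b,c)-th4), since (ii) here is the statement of (ii) in that theorem, and (i) here says $(a^{(t,c)}aw)^{(b,u)}=w^{(b,s)}a^{(t,c)}a$. For Theorem 2.7 to apply, I'd need $u\in R^-$ — this is guaranteed because $(a^{(t,c)}aw)^{(b,u)}$ exists, so by Lemma 2.3, $u\in R^-$. So the equivalence of (i) and (ii) is immediate.

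The main content is deriving the conclusion. The plan is to substitute the two given equalities into one another. I would start from the hypothesis $(aw)^{(b,c)}=(a^{(t,c)}aw)^{(b,u)}a^{(t,c)}$, and then substitute condition (i), $(a^{(t,c)}aw)^{(b,u)}=w^{(b,s)}a^{(t,c)}a$, into the right-hand side:
\[
(aw)^{(b,c)}=(a^{(t,c)}aw)^{(b,u)}a^{(t,c)}=w^{(b,s)}a^{(t,c)}a\,a^{(t,c)}.
\]
Here the key algebraic fact is $a^{(t,c)}aa^{(t,c)}=a^{(t,c)}$, i.e. the $(b,c)$-inverse is an outer inverse; this follows from Lemma 2.3, since existence of $a^{(t,c)}$ gives an element $y$ with $yay=y$ and $y=a^{(t,c)}$. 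Applying this collapses the tail $a\,a^{(t,c)}$ appropriately to yield $w^{(b,s)}a^{(t,c)}$, which is exactly the desired reverse order law.

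\medskip
\noindent\textbf{Proof proposal.} Since $(a^{(t,c)}aw)^{(b,u)}$ exists, Lemma~\ref{ke-reverse(b, c)-lem2} gives $u\in R^-$, so Theorem~\ref{ke-reverse(b, c)-th4} applies and shows that statements (i) and (ii) are indeed equivalent; hence it suffices to assume (i). By Lemma~\ref{ke-reverse(b, c)-lem2}, existence of $a^{(t,c)}$ yields the outer-inverse identity $a^{(t,c)}aa^{(t,c)}=a^{(t,c)}$. Now substitute (i) into the hypothesis $(aw)^{(b,c)}=(a^{(t,c)}aw)^{(b,u)}a^{(t,c)}$ to obtain
\[
(aw)^{(b,c)}=w^{(b,s)}a^{(t,c)}a\,a^{(t,c)}=w^{(b,s)}\bigl(a^{(t,c)}aa^{(t,c)}\bigr)=w^{(b,s)}a^{(t,c)},
\]
which is the required reverse order law. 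The only real step is recognizing which outer-inverse identity to invoke; the rest is a direct substitution, so no serious obstacle is expected.
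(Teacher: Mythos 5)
Your proposal is correct and follows exactly the paper's own argument: establish $u\in R^-$ from the existence of $(a^{(t,c)}aw)^{(b,u)}$ so that Theorem~\ref{ke-reverse(b, c)-th4} gives the equivalence of (i) and (ii), then substitute (i) into the hypothesis and collapse $a^{(t,c)}aa^{(t,c)}$ to $a^{(t,c)}$. No differences worth noting.
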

\proof Since $(a^{(t, c)}aw)^{(b, u)}$ exists, we have $u\in R^-$, so (i) is equivalent to (ii) by Theorem \ref{ke-reverse(b, c)-th4}. Therefore, $$(aw)^{(b, c)}=(a^{(t, c)}aw)^{(b, u)}a^{(t, c)}=w^{(b, s)}a^{(t, c)}aa^{(t, c)}=w^{(b, s)}a^{(t, c)}.$$
\qed

Note that if $u=s$ in Corollary \ref{ke-reverse(b, c)-cor4}, using Theorem \ref{ke-reverse(b, c)-th2}, then we can replace the condition $(aw)^{(b, c)}=(a^{(t, c)}aw)^{(b, u)}a^{(t, c)}$ by $c=caw(a^{(t, c)}aw)^{(b, s)}a^{(t, c)}$ to obtain another sufficient condition for $(aw)^{(b, c)}=w^{(b, s)}a^{(t, c)}.$ Here we left it to reader.

\medskip
Analogously, we have the following results.
\begin{thm}\label{ke-reverse(b, c)-th5}
Let $a, w, b, c, s, t, v\in R$ be such that $a^{(t, c)}$ and
$w^{(b, s)}$ exist. Then the following statements are equivalent:
\begin{enumerate}
\item[\rm (i)]$aww^{(b, s)}\in R^{(v, c)}$ and $(aww^{(b,
s)})^{(v, c)}=ww^{(b, s)}a^{(t, c)}$;
\item[\rm (ii)]$v\in R^-$,
$(1-vv^-)ww^{(b, s)}a^{(t, c)}=0$, $b=ww^{(b, s)}a^{(t,
c)}aww^{(b, s)}b$ and $c=caww^{(b, s)}a^{(t, c)}$.
\end{enumerate}
\end{thm}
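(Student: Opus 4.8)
The plan is to reduce statement (i) to a short list of defining equations by invoking Lemma \ref{ke-reverse(b, c)-lem2}(iii), exactly as in the proofs of Theorems \ref{ke-reverse(b, c)-th1}--\ref{ke-reverse(b, c)-th4}, and then to simplify those equations using the fact that $a^{(t,c)}$ and $w^{(b,s)}$ are already known to exist. Writing $A=aww^{(b,s)}$ for the element under consideration and $y=ww^{(b,s)}a^{(t,c)}$ for the candidate inverse, statement (i) asserts precisely that $A^{(v,c)}$ exists and equals $y$. By Lemma \ref{ke-reverse(b, c)-lem2}(iii) applied to the pair $(v,c)$, this is equivalent to
$$v,c\in R^-,\quad y=vv^-y=yc^-c,\quad v=yAv,\quad c=cAy.$$
The task is then to show that, under the standing hypotheses, this system collapses to the four items listed in (ii).

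First I would record the consequences of the two assumed inverses. Since $a^{(t,c)}$ exists, Lemma \ref{ke-reverse(b, c)-lem2} gives $c\in R^-$ together with $a^{(t,c)}=a^{(t,c)}c^-c$; hence the requirement $c\in R^-$ is automatic, and the equation $y=yc^-c$ holds for free because $y=ww^{(b,s)}a^{(t,c)}=ww^{(b,s)}a^{(t,c)}c^-c=yc^-c$. This already discharges one of the four constraints. Since $w^{(b,s)}$ exists, Lemma \ref{ke-reverse(b, c)-lem2}(ii) supplies the outer-inverse identity $w^{(b,s)}ww^{(b,s)}=w^{(b,s)}$, which is the key algebraic fact I will lean on. The surviving requirement $v\in R^-$ is exactly the first item of (ii), and the annihilator equation $y=vv^-y$ rewrites at once as $(1-vv^-)ww^{(b,s)}a^{(t,c)}=0$, the second item.

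Next I would treat the two ``sandwich'' equations. The equation $v=yAv$ expands to $v=ww^{(b,s)}a^{(t,c)}aww^{(b,s)}v$, which is the middle condition of (ii). For the last equation I would compute $Ay=aww^{(b,s)}\cdot ww^{(b,s)}a^{(t,c)}$ and use the outer-inverse identity to collapse the inner factor $w^{(b,s)}ww^{(b,s)}$ to $w^{(b,s)}$, so that $Ay=aww^{(b,s)}a^{(t,c)}$ and the equation $c=cAy$ becomes $c=caww^{(b,s)}a^{(t,c)}$, the fourth item. Assembling these reductions as a chain of equivalences yields (i) $\Leftrightarrow$ (ii).

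I expect the only genuinely nonroutine point to be the simplification of the right-hand equation $c=cAy$: everything hinges on noticing that the product $ww^{(b,s)}ww^{(b,s)}$ hidden inside $Ay$ contracts through $w^{(b,s)}ww^{(b,s)}=w^{(b,s)}$. The remaining manipulations parallel Theorem \ref{ke-reverse(b, c)-th4} almost verbatim, so I would present them tersely. As a consistency check I would confirm that no hypothesis beyond the existence of $a^{(t,c)}$ and $w^{(b,s)}$ and the explicit assumption $v\in R^-$ is invoked, which matches the fact that (ii) lists $v\in R^-$ but, correctly, omits $c\in R^-$.
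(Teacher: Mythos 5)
Your overall strategy---reduce (i) to the defining equations of Lemma~\ref{ke-reverse(b, c)-lem2}(iii) for the pair $(v,c)$ and then discharge the automatic ones using the existence of $a^{(t,c)}$ and $w^{(b,s)}$---is exactly what the paper intends: it prints no proof of this theorem, only the remark that it is ``analogous'' to Theorem~\ref{ke-reverse(b, c)-th4}. Your handling of the first, second and fourth items of (ii) is correct, including the key contraction $w^{(b,s)}ww^{(b,s)}=w^{(b,s)}$ that turns $c=cAy$ into $c=caww^{(b,s)}a^{(t,c)}$.

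The gap is in the third item. Applying Lemma~\ref{ke-reverse(b, c)-lem2}(iii) to $A=aww^{(b,s)}$ with the pair $(v,c)$, the sandwich equation in the first slot is $v=yAv$, i.e.\ $v=ww^{(b,s)}a^{(t,c)}aww^{(b,s)}v$. You assert that this ``is the middle condition of (ii)'', but the middle condition of (ii) is $b=ww^{(b,s)}a^{(t,c)}aww^{(b,s)}b$, an equation in $b$, not in $v$. These are genuinely different constraints: $v$ and $b$ are independent parameters, and nothing else in (ii) ties $v$ to $yR$ (the item $(1-vv^-)y=0$ gives $y\in vR$, which is the wrong inclusion for deducing $v=yAv$). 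Concretely, in $M_2(\Bbb C)$ take $a=w=b=c=s=t=e$ a nontrivial idempotent and $v=1$: then every item of (ii) holds, while $v=yAv$ would force $yA=e=1$, which fails, so (i) fails; hence the list you actually derived and the list printed in (ii) are not interchangeable. You must either prove that, in the presence of the other conditions, $b=yAb$ is equivalent to $v=yAv$ (I do not believe this is possible), or record the condition your reduction really produces, namely $v=ww^{(b,s)}a^{(t,c)}aww^{(b,s)}v$. The same slippage occurs in the paper's own model proof of Theorem~\ref{ke-reverse(b, c)-th4}, where the lemma yields $u=u(a^{(t,c)}aw)y$ but the condition recorded is $c=c(a^{(t,c)}aw)y$ (and $yu^-u$ becomes $yuu^-$); your final ``consistency check'' should have caught that the parameter appearing in the sandwich equation does not match the one prescribed by the pair $(v,c)$.
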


\begin{cor}
Let $a, w, b, c, s, t, v\in R$ be such that $a^{(t, c)}$, $w^{(b, s)}$, $(aw)^{(b, c)}$ and $(aww^{(b, s)})^{(v, c)}$ exist. If $(aw)^{(b, c)}=w^{(b, s)}(aww^{(b, s)})^{(t, c)}$ and one of the following equivalent statements holds:
\begin{enumerate}
\item[\rm (i)]$aww^{(b, s)}\in R^{(v, c)}$ and $(aww^{(b,
s)})^{(v, c)}=ww^{(b, s)}a^{(t, c)}$;

\item[\rm (ii)]$(1-vv^-)ww^{(b, s)}a^{(t, c)}=0$, $b=ww^{(b,
s)}a^{(t, c)}aww^{(b, s)}b$ and $c=caww^{(b, s)}a^{(t, c)}$,
\end{enumerate}
then $(aw)^{(b, c)}=w^{(b, s)}a^{(t, c)}.$
\end{cor}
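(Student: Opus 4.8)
The plan is to imitate, essentially verbatim, the two-line proof of Corollary \ref{ke-reverse(b, c)-cor4}, with Theorem \ref{ke-reverse(b, c)-th5} playing the role there played by Theorem \ref{ke-reverse(b, c)-th4}. The entire argument reduces to one substitution followed by a single outer-inverse cancellation, so I expect no genuine difficulty; the only point that needs care is keeping the indices on the $(b, c)$-inverse of $aww^{(b, s)}$ consistent.

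First I would record that $v \in R^-$. Since $(aww^{(b, s)})^{(v, c)}$ is assumed to exist, Lemma \ref{ke-reverse(b, c)-lem2} forces $v$ to be regular; together with the standing existence of $a^{(t, c)}$ and $w^{(b, s)}$, this puts all hypotheses of Theorem \ref{ke-reverse(b, c)-th5} in force. That theorem then applies and shows its two conditions are equivalent. These coincide with (i) and (ii) of the present corollary, which is exactly what the phrase ``one of the following equivalent statements'' refers to; hence it suffices to finish under assumption (i), that is, $(aww^{(b, s)})^{(v, c)}=ww^{(b, s)}a^{(t, c)}$.

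Finally I would substitute (i) into the assumed reverse order law. Replacing $(aww^{(b, s)})^{(v, c)}$ by $ww^{(b, s)}a^{(t, c)}$ in $(aw)^{(b, c)}=w^{(b, s)}(aww^{(b, s)})^{(v, c)}$ gives $(aw)^{(b, c)}=w^{(b, s)}ww^{(b, s)}a^{(t, c)}$, and collapsing the middle via the outer-inverse identity $w^{(b, s)}ww^{(b, s)}=w^{(b, s)}$ (Lemma \ref{ke-reverse(b, c)-lem2}(ii) gives $yay=y$ for $y=w^{(b, s)}$) yields $(aw)^{(b, c)}=w^{(b, s)}a^{(t, c)}$, as required. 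This is the exact analogue of Corollary \ref{ke-reverse(b, c)-cor4}, where the cancellation used $a^{(t, c)}aa^{(t, c)}=a^{(t, c)}$ instead. The single step to watch --- and the only place an error could creep in --- is ensuring that the inverse produced by (i) and the inverse appearing as the right-hand factor of the assumed law are the same $(v, c)$-inverse of $aww^{(b, s)}$, consistent with the existence hypothesis $(aww^{(b, s)})^{(v, c)} \in R$; once the indices are aligned in this way, the outer-inverse cancellation finishes the proof in one line.
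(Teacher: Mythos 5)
The proposal is correct and is precisely the argument the paper intends (the paper states this corollary without a written proof, as the direct analogue of Corollary \ref{ke-reverse(b, c)-cor4}): note $v\in R^-$ from the existence of $(aww^{(b,s)})^{(v,c)}$, invoke Theorem \ref{ke-reverse(b, c)-th5} for the equivalence of (i) and (ii), substitute (i) into the assumed reverse order law, and cancel via the outer-inverse identity $w^{(b,s)}ww^{(b,s)}=w^{(b,s)}$. You are also right to flag the index alignment: the superscript $(t,c)$ on $(aww^{(b,s)})^{(t,c)}$ in the displayed hypothesis must be read as $(v,c)$ --- consistent with the existence assumptions, which list only $(aww^{(b,s)})^{(v,c)}$ --- for the one-line substitution to go through.
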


%\begin{prop}
%Let $a, w, b, c, u, v\in R$ be such that $a^{(b, c)}$ and $w^{(b, c)}$ exist. Then
%\begin{enumerate}
%\item[(i)]  $a^{(b, c)}aw\in R^{(b, u)}$ and $(a^{(b, c)}aw)^{(b, u)}=w^{(b, c)}a^{(b, c)}a$ if and only if $u\in R^-$,
%    $$w^{(b, c)}a^{(b, c)}a(1-uu^-)=0, \quad b=w^{(b, c)}a^{(b, c)}awb, \quad c=ca^{(b, c)}aww^{(b, c)}a^{(b, c)}a.$$
%\item[(ii)] $aww^{(b, c)}\in R^{(v, c)}$ and $(aww^{(b, c)})^{(v, c)}=ww^{(b, c)}a^{(b, c)}$ if and only if $v\in R^-$, $$(1-vv^-)ww^{(b, c)}a^{(b, c)}=0, \quad b=ww^{(b, c)}a^{(b, c)}aww^{(b, c)}b, \quad c=caww^{(b, c)}a^{(b, c)}.$$
%\end{enumerate}
%\end{prop}

\begin{cor}
Let $a, w, b, c, u, v\in R$ be such that $a^{(b, c)}$ and $w^{(b, c)}$ exist. Then
\begin{enumerate}
\item[\rm (i)] If $wb=bw$, then  $a^{(b, c)}aw\in R^{(b, u)}$ and
$(a^{(b, c)}aw)^{(b, u)}=w^{(b, c)}a^{(b, c)}a$ if and only if
$$u\in R^-, \quad w^{(b, c)}a^{(b, c)}a(1-uu^-)=0, \quad c=ca^{(b, c)}a.$$

\item[\rm (ii)] If $ca=ac$, then $aww^{(b, c)}\in R^{(v, c)}$ and
$(aww^{(b, c)})^{(v, c)}=ww^{(b, c)}a^{(b, c)}$ if and only if
$$v\in R^-, \quad (1-vv^-)ww^{(b, c)}a^{(b, c)}=0, \quad b=ww^{(b, c)}b.$$
\end{enumerate}
\end{cor}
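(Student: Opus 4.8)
The plan is to obtain both statements from Theorems \ref{ke-reverse(b, c)-th4} and \ref{ke-reverse(b, c)-th5} by specializing $s=c$ and $t=b$, and then to show that each commutativity hypothesis makes two of the resulting equations redundant. Write $y=a^{(b,c)}$ and $z=w^{(b,c)}$; I will use throughout the defining identities $yab=b$, $cay=c$, $zwb=b$, $cwz=c$ together with the ideal equalities $yR=zR=bR$ and $Ry=Rz=Rc$ supplied by Lemma \ref{ke-reverse(b, c)-lem2}.

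For part (i), Theorem \ref{ke-reverse(b, c)-th4} with $s=c,\ t=b$ says that $a^{(b,c)}aw\in R^{(b,u)}$ with inverse $w^{(b,c)}a^{(b,c)}a$ is equivalent to the four conditions $u\in R^-$, $zya(1-uu^-)=0$, $b=zyawb$ and $c=cyawzya$. The first two already match the corollary, so it remains to show, under $wb=bw$, that $b=zyawb$ holds automatically and that $c=cyawzya$ coincides with $c=cya$. The first follows from the chain $zyawb=zyabw=zbw=zwb=b$, using $wb=bw$, then $yab=b$, then $bw=wb$ and $zwb=b$.

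The key computation is the identity $cyawzya=cya$, valid whenever $wb=bw$; once it is established the two $c$-equations become literally the same, which yields the desired equivalence in both directions. To prove it I would invoke the ideal relation $zR=bR$ to write $z=bk$ for some $k\in R$, so that $wz=wbk=(wb)k=(bw)k=bwk$. Substituting and regrouping, $cyawzya=cya(bwk)ya=(cyab)(wkya)=(cb)(wkya)$ by $yab=b$, and then $cb\,wk=c(bwk)=c(wz)=cwz=c$ collapses the whole expression to $cya$. Recognizing that the superfluous factor $wz$ can be absorbed only after rewriting it through the generator relation $z=bk$ and the commutation $wb=bw$ is the step I expect to be the main obstacle.

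Part (ii) is the left--right mirror image, handled through Theorem \ref{ke-reverse(b, c)-th5} with $s=c,\ t=b$, whose condition (ii) reads $v\in R^-$, $(1-vv^-)wzy=0$, $b=wzyawzb$ and $c=cawzy$. Under $ca=ac$ the equation $c=cawzy$ is automatic, since $cawzy=acwzy=a(cwz)y=acy=cay=c$; and the dual identity $wzyawzb=wzb$ holds, so that $b=wzyawzb$ reduces to $b=wzb$. For this identity I would use the dual ideal relation $Ry=Rc$ to write $y=mc$, then commute $ca=ac$ to expose first the factor $cwz=c$ and afterwards $mc=y$, and finally apply $yab=b$. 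As in part (i), producing this identity is the delicate point, while the remaining reductions are routine.
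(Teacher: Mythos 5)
Your proposal is correct and follows essentially the same route as the paper: specialize Theorems \ref{ke-reverse(b, c)-th4} and \ref{ke-reverse(b, c)-th5} to $s=c$, $t=b$ and use the commutativity hypothesis together with $w^{(b,c)}\in bR$ (resp.\ $a^{(b,c)}\in Rc$) to show the two extra conditions collapse; the paper encodes the membership as $w^{(b,c)}=bb^-w^{(b,c)}$ where you write $w^{(b,c)}=bk$, which is an immaterial difference. Your write-up is in fact slightly more complete, since you verify explicitly that $b=w^{(b,c)}a^{(b,c)}awb$ is automatic, a step the paper leaves implicit.
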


\begin{proof} (i) It follows by Theorem \ref{ke-reverse(b, c)-th4}
and
\begin{eqnarray*}
ca^{(b, c)}aww^{(b, c)}a^{(b, c)}a&=&ca^{(b, c)}a(wb)b^-w^{(b,
c)}a^{(b, c)}a=c(a^{(b, c)}ab)wb^-w^{(b, c)}a^{(b, c)}a\\
&=&c(bw)b^-w^{(b, c)}a^{(b, c)}a=cw(bb^-w^{(b, c)})a^{(b, c)}a\\
&=&cww^{(b, c)}a^{(b, c)}a=ca^{(b, c)}a.
\end{eqnarray*}

(ii) In the similar way as (i).
\end{proof}

\medskip
Next, we investigate the following reverse order law
$$(aw)^{(b, c)}=w^{(b, c)}(a^{(t, c)}aww^{(b, c)})^{(b,c)}a^{(t, c)}.$$

\begin{thm}
Let $a, w, b, c, t\in R$ be such that $a^{(t, c)}$, $w^{(b, c)}$
and $(a^{(t, c)}aww^{(b, c)})^{(b,c)}$ exist. Then the following
statements are equivalent:
\begin{enumerate}
\item[\rm (i)] $aw\in R^{(b, c)}$ and $(aw)^{(b, c)}=w^{(b,
c)}(a^{(t, c)}aww^{(b, c)})^{(b,c)}a^{(t, c)};$

\item[\rm (ii)] $a^{(t, c)}aw\in R^{(b, c)}$, $aww^{(b, c)}\in
R^{(t, c)}$, $(a^{(t, c)}aw)^{(b,c)}=w^{(b,c)}(a^{(t,
c)}aww^{(b,c)})^{(b,c)}$ and \linebreak
$(aww^{(b,c)})^{(t,c)}=(a^{(t, c)}aww^{(b,c)})^{(b,c)}a^{(t,c)}$.
\end{enumerate}
\end{thm}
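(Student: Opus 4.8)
The plan is to push everything through the regularized characterization of the $(b,c)$-inverse in Lemma~\ref{ke-reverse(b, c)-lem2}(iii), exactly as in the proofs of Theorems~\ref{ke-reverse(b, c)-th1}, \ref{ke-reverse(b, c)-th2} and \ref{ke-reverse(b, c)-th3}. Write $M=a^{(t,c)}aww^{(b,c)}$ for brevity, so the common middle factor is $M^{(b,c)}$. From the hypotheses and Lemma~\ref{ke-reverse(b, c)-lem2} I first record the identities I will reuse: $a^{(t,c)}=a^{(t,c)}c^-c$ and $c=caa^{(t,c)}$; $w^{(b,c)}=bb^-w^{(b,c)}$ and $b=w^{(b,c)}wb$; and, for the middle inverse, $M^{(b,c)}=bb^-M^{(b,c)}=M^{(b,c)}c^-c$ together with $b=M^{(b,c)}Mb$ and $c=cMM^{(b,c)}$. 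Put $y=w^{(b,c)}M^{(b,c)}a^{(t,c)}$. The two ``outer'' requirements $y=bb^-y=yc^-c$ of Lemma~\ref{ke-reverse(b, c)-lem2}(iii) then hold automatically, since $bb^-w^{(b,c)}=w^{(b,c)}$ and $a^{(t,c)}c^-c=a^{(t,c)}$, so statement (i) is equivalent to the single pair of equations
\begin{equation*}
b=w^{(b,c)}M^{(b,c)}a^{(t,c)}awb \quad\text{and}\quad c=caww^{(b,c)}M^{(b,c)}a^{(t,c)},
\end{equation*}
which I label (A) and (B).

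For (ii)$\Rightarrow$(i) I would argue directly. From $(a^{(t,c)}aw)^{(b,c)}=w^{(b,c)}M^{(b,c)}$ the defining identity $b=(a^{(t,c)}aw)^{(b,c)}(a^{(t,c)}aw)b$ is precisely (A); from $(aww^{(b,c)})^{(t,c)}=M^{(b,c)}a^{(t,c)}$ the identity $c=c(aww^{(b,c)})(aww^{(b,c)})^{(t,c)}$ is precisely (B). Having (A) and (B), the characterization above yields (i). Alternatively, one may feed $b=w^{(b,c)}(aww^{(b,c)})^{(t,c)}awb$ straight into Theorem~\ref{ke-reverse(b, c)-th3} (with $s=c$), whose hypothesis it is, to get $aw\in R^{(b,c)}$ and $(aw)^{(b,c)}=w^{(b,c)}(aww^{(b,c)})^{(t,c)}=w^{(b,c)}M^{(b,c)}a^{(t,c)}$.

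For the converse (i)$\Rightarrow$(ii) I would start from (A), (B) and build the two one-sided inverses. That $a^{(t,c)}aw\in R^{(b,c)}$ with $(a^{(t,c)}aw)^{(b,c)}=w^{(b,c)}M^{(b,c)}$ is the easy half: by Lemma~\ref{ke-reverse(b, c)-lem2}(iii) its two nontrivial equations are $b=w^{(b,c)}M^{(b,c)}(a^{(t,c)}aw)b$, which is (A), and $c=c(a^{(t,c)}aw)w^{(b,c)}M^{(b,c)}$, which is automatic because $c(a^{(t,c)}aw)w^{(b,c)}=cM$ and $c=cMM^{(b,c)}$. The second half, $aww^{(b,c)}\in R^{(t,c)}$ with $(aww^{(b,c)})^{(t,c)}=M^{(b,c)}a^{(t,c)}$, is the delicate one: Lemma~\ref{ke-reverse(b, c)-lem2}(iii) now demands, besides $c=c(aww^{(b,c)})M^{(b,c)}a^{(t,c)}$ (which is (B)), the left-hand conditions $t=M^{(b,c)}a^{(t,c)}(aww^{(b,c)})t=M^{(b,c)}Mt$ and $M^{(b,c)}a^{(t,c)}=tt^-M^{(b,c)}a^{(t,c)}$.

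I expect these last two conditions to be the main obstacle. They govern the right ideal of $M^{(b,c)}a^{(t,c)}$, and because $a^{(t,c)}R=tR$ while $M^{(b,c)}R=bR$, establishing $M^{(b,c)}a^{(t,c)}R=tR$ is exactly the point at which the index $t$ of the outer factor $a^{(t,c)}$ must be reconciled with the index $b$ of the middle inverse $M^{(b,c)}$. So the crux of the whole argument is to show that the idempotent $M^{(b,c)}M$ fixes $t$ and that $M^{(b,c)}a^{(t,c)}$ lies in $tt^-R$; this is where I would concentrate my effort, looking for how the existence of $M^{(b,c)}$ (together with (A) and (B)) forces the needed compatibility between $tR$ and $bR$, and, should that compatibility not be automatic, for the precise extra hypothesis relating these ideals that makes the equation $(aww^{(b,c)})^{(t,c)}=M^{(b,c)}a^{(t,c)}$ hold.
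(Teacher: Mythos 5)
Your handling of (ii)$\Rightarrow$(i), and of the first half of (i)$\Rightarrow$(ii) (the statement about $a^{(t,c)}aw$), coincides with the paper's own proof, which pushes everything through Lemma \ref{ke-reverse(b, c)-lem2} exactly as you describe. The point at which you stop---the two conditions $M^{(b,c)}a^{(t,c)}=tt^-M^{(b,c)}a^{(t,c)}$ and $t=M^{(b,c)}Mt$ governing the right ideal of $M^{(b,c)}a^{(t,c)}$---is precisely where the paper writes ``Similarly, we can obtain that $aww^{(b,c)}\in R^{(t,c)}\dots$'' and supplies no argument. The symmetry is illusory: in the first half both $w^{(b,c)}$ and $M^{(b,c)}$ are $(b,\cdot)$-inverses, so the left factor $bb^-$ passes through, whereas in the second half one needs the left factor $tt^-$, and $M^{(b,c)}a^{(t,c)}R\subseteq M^{(b,c)}R=bR$ need not have anything to do with $tR$.

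Your suspicion that an extra hypothesis is needed is correct: the implication (i)$\Rightarrow$(ii) is in fact false as stated, so no amount of effort at the point you flagged will close the gap. Take $R=M_2(\mathbb{C})$ with matrix units $e_{ij}$, and set $a=w=1$, $t=c=e_{11}$, $b=e_{11}+e_{21}$. Then $b^2=b$, $be_{11}=b$, $e_{11}b=e_{11}$, and one checks directly from the definition that $a^{(t,c)}=e_{11}$, $w^{(b,c)}=b$, $M:=a^{(t,c)}aww^{(b,c)}=e_{11}b=e_{11}$ and $M^{(b,c)}=b$, so all hypotheses hold. Moreover $(aw)^{(b,c)}=1^{(b,c)}=b$ and $w^{(b,c)}M^{(b,c)}a^{(t,c)}=b\cdot b\cdot e_{11}=b$, so (i) holds. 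But $M^{(b,c)}a^{(t,c)}=be_{11}=b$ cannot be the $(t,c)$-inverse of $aww^{(b,c)}=b$, since every element of $tRb=e_{11}Rb$ has zero second row while $b$ does not, so the membership $y\in tRy$ fails (indeed $b^{(e_{11},e_{11})}=e_{11}\neq b$); hence the last equation of (ii) fails. So only (ii)$\Rightarrow$(i) survives without further assumptions, and your refusal to accept the ``similarly'' is the right call: closing the converse requires an added hypothesis reconciling $tR$ with $bR$ (for instance $tR=bR$, or replacing the middle inverse's index $b$ by $t$ on the appropriate side).
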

\proof  (i)$\Rightarrow$ (ii): If (i) holds, first we will observe
that \begin{equation}\label{ke-reverse(b, c)-eq3} a^{(t, c)}aw\in
R^{(b,c)}\quad \mbox{and}\quad (a^{(t, c)}aw)^{(b,c)}=w^{(b,
c)}(a^{(t, c)}aww^{(b,c)})^{(b,c)}.
\end{equation}
In fact,  since $a^{(t, c)}$, $w^{(b, c)}$ and $(a^{(t,
c)}aww^{(b, c)})^{(b,c)}$ exist, by Lemma \ref{ke-reverse(b,
c)-lem2}, we know $b, c, t\in R^-$, $$bb^-w^{(b,c)}=w^{(b,c)},
\quad (a^{(t, c)}aww^{(b,c)})^{(b,c)}=(a^{(t, c)}aww^{(b,
c)})^{(b,c)}c^-c,$$
$$ca^{(t, c)}aww^{(b,c)}(a^{(t, c)}aww^{(b,c)})^{(b,c)}=c.$$
Then $w^{(b,c)}(a^{(t, c)}aww^{(b,c)})^{(b,c)}=bb^-w^{(b,
c)}(a^{(t, c)}aww^{(b,c)})^{(b,c)}=w^{(b,c)}(a^{(t,c)}aww^{(b,
c)})^{(b,c)}c^-c$.
As (i) holds, we have
$$w^{(b,c)}(a^{(t, c)}aww^{(b,c)})^{(b,c)}a^{(t, c)}awb=(aw)^{(b,c)}(aw)b=b.$$
 Again using Lemma \ref{ke-reverse(b, c)-lem2},
we can conclude that (\ref{ke-reverse(b, c)-eq3}) holds.

 Similarly, we can obtain that $aww^{(b,c)}\in R^{(t, c)}$ and
 $(aww^{(b,c)})^{(t, c)}=(a^{(t, c)}aww^{(b,c)})^{(b,c)}a^{(t, c)}$.

        (ii)$\Rightarrow$ (i): If (ii) holds, by Lemma \ref{ke-reverse(b, c)-lem2},  we see $b, c\in R^-$,
        $$w^{(b,c)}(a^{(t, c)}aww^{(b,c)})^{(b,c)}=bb^-w^{(b,c)}(a^{(t, c)}aww^{(b,c)})^{(b,c)},$$
        $$(a^{(t, c)}aww^{(b,c)})^{(b,c)}a^{(t, c)}=(a^{(t, c)}aww^{(b,c)})^{(b,c)}a^{(t, c)}c^-c,$$
        $$w^{(b,c)}(a^{(t, c)}aww^{(b,c)})^{(b,c)}a^{(t, c)}awb=(a^{(t, c)}aw)^{(b, c)}a^{(t, c)}awb=b,$$
        $$caww^{(b,c)}(a^{(t, c)}aww^{(b,c)})^{(b,c)}a^{(t, c)}=caww^{(b,c)}(aww^{(b,c)})^{(t,c)}=c.$$
        Therefore, we can conclude that (i) holds.
\qed

%\begin{prop}
%Let $a, w, b, c\in R$ be such that $a^{(b, c)}$, $w^{(b, c)}$ and
%$(a^{(b, c)}aww^{(b, c)})^{(b, c)}$ exist. Then the following
%statements are equivalent:
%\begin{enumerate}
%\item[(i)] $aw\in R^{(b, c)}$ and $(aw)^{(b, c)}=w^{(b, c)}(a^{(b, c)}aww^{(b, c)})^{(b, c)}a^{(b, c)};$
%\item[(ii)] $a^{(b, c)}aw\in R^{(b, c)}$, $~aww^{(b, c)}\in R^{(b, c)}$, $~(a^{(b, c)}aw)^{(b, c)}=w^{(b, c)}(a^{(b, c)}aww^{(b, c)})^{(b, c)}$ and $(aww^{(b, c)})^{(b, c)}=(a^{(b, c)}aww^{(b, c)})^{(b, c)}a^{(b, c)}$.
%\end{enumerate}
%\end{prop}

\begin{rem}\rm{
Since the Mary inverse, Bott-Duffin $(e, f)$-inverse, image-kernel
$(p, q)$-inverse are all the special cases of $(b, c)$-inverse if
we choose $b$ and $c$ appropriately, related results for these
inverses are obtained. }
\end{rem}

The reverse order law for the $(p, q)$-outer generalized inverse are considered in the following sequel.

\begin{prop} Let $p, q, r, s\in R^\bullet$ and let $a, b\in R$ be such that $a^{(2)}_{p, q}$ and $b^{(2)}_{s, 1-p}$ exist. Then:
\begin{enumerate}
\item[\rm (i)] $ab\in R^{(2)}_{s, q}$ and $(ab)^{(2)}_{s,
q}=b^{(2)}_{s, 1-p}a^{(2)}_{p, q}$;

\item[\rm (ii)] $a^{(2)}_{p, q}ab\in R^{(2)}_{s, 1-p}$ and
$(a^{(2)}_{p, q}ab)^{(2)}_{s, 1-p}=b^{(2)}_{s, 1-p}a^{(2)}_{p,
q}a$;

\item[\rm (iii)] $abb^{(2)}_{s, 1-p}\in R^{(2)}_{p, q}$ and
$(abb^{(2)}_{s, 1-p})^{(2)}_{p, q}=bb^{(2)}_{s, 1-p}a^{(2)}_{p,
q}$.
\end{enumerate}
\end{prop}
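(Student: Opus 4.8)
The plan is to verify in each case the three defining relations of a $(p,q)$-outer inverse directly from the data, so no deep machinery is needed. I would first fix the notation $x=a^{(2)}_{p,q}$ and $z=b^{(2)}_{s,1-p}$ and record the six defining identities
$$xax=x,\quad xa=p,\quad ax=1-q,\qquad zbz=z,\quad zb=s,\quad bz=1-(1-p)=p.$$
The single observation that makes the whole proposition work is the bridge identity $xa=p=bz$: the middle idempotent of $b$ has been chosen precisely so that the range idempotent $bz$ of $b$ agrees with the idempotent $xa$ attached to $a$. Before touching the three parts I would also record the idempotency relations $p^2=p$, $s^2=s$, $(1-q)^2=1-q$ together with the two consequences $sz=(zb)z=zbz=z$ and $(bz)(bz)x=b(zbz)x=bzx$, since these are exactly what the outer axioms will consume.

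For part (i) I would prove $(ab)^{(2)}_{s,q}=zx$ by checking the three axioms. The left-idempotent axiom is immediate: $(zx)(ab)=z(xa)b=zpb=z(bz)b=(zb)(zb)=s^2=s$. The right-idempotent axiom is dual: $(ab)(zx)=a(bz)x=apx=a(xa)x=(ax)^2=(1-q)^2=1-q$. The outer axiom then reuses the first computation, $(zx)(ab)(zx)=s\,zx=(sz)x=zx$, via $sz=z$.

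Parts (ii) and (iii) I would then reduce to part (i) using the two outer relations $xax=x$ and $zbz=z$. For (ii), targeting $(a^{(2)}_{p,q}ab)^{(2)}_{s,1-p}=zxa$, the crucial collapse is $(zxa)(xab)=z(xax)ab=zxab=(zx)(ab)=s$; the right axiom then reads $(xab)(zxa)=(xa)(bz)(xa)=p^3=p=1-(1-p)$, and the outer axiom again uses $sz=z$. For (iii), targeting $(abb^{(2)}_{s,1-p})^{(2)}_{p,q}=bzx$, one gets $(bzx)(abz)=(bz)(xa)(bz)=p^3=p$, then $(abz)(bzx)=ab(zbz)x=abzx=a(bz)x=(ax)^2=1-q$, and the outer axiom uses $(bz)(bz)x=bzx$.

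I do not expect a genuine obstacle beyond careful bookkeeping: the only step in each part that is not a one-line substitution is the outer relation $yay=y$, and that is exactly where the derived identities $sz=z$ and $b(zbz)x=bzx$ are needed. The main thing to watch is to route every simplification through the bridge identity $xa=p=bz$ and to avoid trying to reduce products such as $pb$ or $bs$, which do not collapse. I note finally that the parameter $r$ listed in the hypotheses never appears in the statement and plays no role.
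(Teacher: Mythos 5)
Your proof is correct: all nine axiom checks go through, the bridge identity $a^{(2)}_{p,q}a=p=bb^{(2)}_{s,1-p}$ is exactly the point of the hypothesis, and your remark that $r$ is vacuous in the statement is also accurate. The paper states this proposition without proof, and your direct verification of the three defining relations $yay=y$, $ya=p$, $ay=1-q$ in each case is plainly the intended argument, so there is nothing to contrast.
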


\begin{prop} Let $p, q, r, s, t\in R^\bullet$ and $a, b\in R$.
\begin{enumerate}
\item[\rm (i)] If $a\in R^{(2)}_{p, q}$ and $ab\in R^{(2)}_{t,
q}$, then $a^{(2)}_{p, q}ab\in R^{(2)}_{t, 1-p}$ and $(a^{(2)}_{p,
q}ab)^{(2)}_{t, 1-p}=(ab)^{(2)}_{t, q}a$.

\item[\rm (ii)] If $b\in R^{(2)}_{s, t}$ and $ab\in R^{(2)}_{s,
r}$, then $abb^{(2)}_{s, t}\in R^{(2)}_{1-t, r}$ and
$(abb^{(2)}_{s, t})^{(2)}_{1-t, r}=b(ab)^{(2)}_{s, r}$.

\item[\rm (iii)] If $a\in R^{(2)}_{p, q}$, $b\in R^{(2)}_{s, t}$
and $ab\in R^{(2)}_{s, q}$, then $a^{(2)}_{p, q}abb^{(2)}_{s,
t}\in R^{(2)}_{1-t, 1-p}$ and $(a^{(2)}_{p, q}abb^{(2)}_{s,
t})^{(2)}_{1-t, 1-p}=b(ab)^{(2)}_{s, q}a$.
\end{enumerate}
\end{prop}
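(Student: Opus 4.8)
The plan is to reduce each of the three assertions to the definition of a $(p,q)$-outer inverse. Recall that proving $x^{(2)}_{\alpha,\beta}=z$ amounts to checking the three identities $zxz=z$, $zx=\alpha$ and $xz=1-\beta$. So in each part I would name the relevant outer inverses by single letters, record their defining relations (the outer identity together with the two idempotent equations prescribing $yx$ and $xy$), and then verify the three required equations for the candidate pair $x,z$ by direct substitution, invoking uniqueness of the outer inverse to identify $z$.

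For part (i), set $\alpha=a^{(2)}_{p,q}$ and $\gamma=(ab)^{(2)}_{t,q}$, so that $x=\alpha ab$ and $z=\gamma a$. The pivotal observation is that the two given inverses share the same right idempotent: $a\alpha=1-q=(ab)\gamma$. Using this, $zx=\gamma(a\alpha)(ab)=\gamma(ab\gamma)(ab)=(\gamma ab)(\gamma ab)=t$, since $\gamma ab=t$ is idempotent. Next, from $\alpha a\alpha=\alpha$ one gets $\alpha(1-q)=\alpha(a\alpha)=(\alpha a)\alpha=\alpha$, whence $xz=\alpha(ab\gamma)a=\alpha(1-q)a=\alpha a=p=1-(1-p)$. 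Finally $t\gamma=\gamma(ab)\gamma=\gamma$ gives $zxz=tz=\gamma a=z$, so $x\in R^{(2)}_{t,1-p}$ with $x^{(2)}_{t,1-p}=z$, as claimed.

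Parts (ii) and (iii) run on the same template with the mirror-image substitutions on the left. For (ii), with $\beta=b^{(2)}_{s,t}$, $\delta=(ab)^{(2)}_{s,r}$, $x=ab\beta$ and $z=b\delta$, the driving identities are $\beta b=s=\delta(ab)$, together with the consequences $s\delta=\delta(ab)\delta=\delta$ and $\beta b\beta=\beta$; these collapse $zx=b(\delta(ab))\beta=bs\beta=b\beta=1-t$ and $xz=(ab)(\beta b)\delta=(ab)\delta=1-r$, after which $zxz=(1-t)b\delta=b\beta b\delta=bs\delta=b\delta=z$. Part (iii) simply superposes the two mechanisms: with $\mu=(ab)^{(2)}_{s,q}$, $x=\alpha ab\beta$ and $z=b\mu a$, the substitution $a\alpha=ab\mu$ (both equal $1-q$) telescopes $zx=b(\mu ab)(\mu ab)\beta=bs\beta=b\beta=1-t$, while $\beta b=s$, $s\mu=\mu$ and $\alpha(1-q)=\alpha$ give $xz=\alpha(ab\mu)a=\alpha a=p$, with $zxz=z$ following exactly as before.

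The computations are entirely routine once the right substitutions are spotted, so I do not expect a genuine obstacle; the only points requiring care are the bookkeeping of which idempotent is the left factor ($zx$) and which is the right factor ($xz$) in each conclusion, and the recurring trick that the shared value $1-q$ (respectively $s$) lets a defining equation of one outer inverse feed directly into the computation for the other. Accordingly I would write out the verification of (i) in full and then indicate (ii) and (iii) by the analogous substitutions.
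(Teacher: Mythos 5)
Your verification is correct: all three identities $zxz=z$, $zx=\alpha$, $xz=1-\beta$ check out in each part, and the key observations ($a\alpha=1-q=(ab)\gamma$ in (i), $\beta b=s=\delta(ab)$ in (ii), and their superposition in (iii)) are exactly what makes the computations telescope. The paper states this proposition without proof, and your direct verification from the definition of the $(p,q)$-outer inverse is the natural argument it implicitly relies on.
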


\begin{prop} Let $p, q, r, s, t\in R^\bullet$ and $a, b\in R$.
\begin{enumerate}
\item[\rm (i)] If $a\in R^{(1, 2)}_{p, q}$ and $a^{(1, 2)}_{p,
q}ab\in R^{(2)}_{t, 1-p}$, then $ab\in R^{(2)}_{t, q}$ and
$(ab)^{(2)}_{t, q}=(a^{(1, 2)}_{p, q}ab)^{(2)}_{t, 1-p}a^{(1,
2)}_{p, q}$.

\item[\rm (ii)] If $b\in R^{(1, 2)}_{s, t}$ and $abb^{(1, 2)}_{s,
t}\in R^{(2)}_{1-t, r}$, then $ab\in R^{(2)}_{s, r}$ and
$(ab)^{(2)}_{s, r}=b^{(1, 2)}_{s, t}(abb^{(1, 2)}_{s,
t})^{(2)}_{1-t, r}$.

\item[\rm (iii)] If $a\in R^{(1, 2)}_{p, q}$, $b\in R^{(1, 2)}_{s,
t}$ and $a^{(1, 2)}_{p, q}abb^{(1, 2)}_{s, t}\in R^{(2)}_{1-t,
1-p}$, then $ab\in R^{(2)}_{s, q}$ and $(ab)^{(2)}_{s, q}=b^{(1,
2)}_{s, t}(a^{(1, 2)}_{p, q}abb^{(1, 2)}_{s, t})^{(2)}_{1-t,
1-p}a^{(1, 2)}_{p, q}$.
\end{enumerate}
\end{prop}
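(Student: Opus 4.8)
The plan is to handle all three parts in the same way: for each, write down the natural candidate for the outer inverse — the product on the right-hand side of the asserted formula — and then verify directly the three defining equations of a $(\cdot,\cdot)$-outer inverse, i.e. $y(ab)y=y$ together with the two ``image/kernel'' equations fixing $y(ab)$ and $(ab)y$. Throughout I will exploit the four relations attached to a reflexive inverse $x=a^{(1,2)}_{p,q}$, namely $xax=x$, $xa=p$, $ax=1-q$, and the \emph{inner} relation $axa=a$; this last relation is precisely the extra strength of $R^{(1,2)}$ over $R^{(2)}$ and is what drives the proof. I will also use repeatedly that the prescribed idempotents satisfy $(1-q)^2=1-q$, $s^2=s$, and so on.

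For (i), put $x=a^{(1,2)}_{p,q}$ and $z=(xab)^{(2)}_{t,1-p}$, so $z(xab)=t$, $(xab)z=p$ and $z(xab)z=z$, and take $y=zx$. The equation $y(ab)=z(xab)=t$ is immediate, and $y(ab)y=t\,zx=z(xab)zx=zx=y$ follows at once from $z(xab)z=z$. The only step needing the inner relation is $(ab)y=1-q$: I will first rewrite $ab=axab=a(xab)$ using $axa=a$, then collapse $a(xab)zx=a[(xab)z]x=apx=a(xa)x=(ax)^2=(1-q)^2=1-q$. This gives $ab\in R^{(2)}_{t,q}$ with $(ab)^{(2)}_{t,q}=zx$, as claimed.

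Part (ii) is the mirror image. With $x=b^{(1,2)}_{s,t}$ and $z=(abx)^{(2)}_{1-t,r}$ I set $y=xz$; now $(ab)y=(abx)z=1-r$ is immediate and $y(ab)y=y$ again reduces to $z(abx)z=z$. The step using the inner relation $bxb=b$ is $y(ab)=s$: rewrite $ab=abxb=(abx)b$ so that $y(ab)=xz(abx)b=x(1-t)b=x(bx)b=(xb)^2=s^2=s$. For (iii) I combine both tricks: writing $x_a=a^{(1,2)}_{p,q}$, $x_b=b^{(1,2)}_{s,t}$, $M=x_aabx_b$ and $z=M^{(2)}_{1-t,1-p}$ (so $zM=1-t$, $Mz=p$, $zMz=z$), I take $y=x_bzx_a$. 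Since $x_a(ab)x_b=M$, the equation $y(ab)y=y$ reduces to $zMz=z$; using $bx_bb=b$ I get $x_a(ab)=Mb$, whence $y(ab)=x_bzMb=x_b(1-t)b=(x_bb)^2=s$; and dually, using $ax_aa=a$ I get $(ab)x_b=aM$, whence $(ab)y=aMzx_a=apx_a=(ax_a)^2=1-q$.

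I expect the only real obstacle to be organizational rather than conceptual: selecting the right candidate $y$ and, above all, spotting the rewritings $ab=a(xab)$, $ab=(abx)b$ (and their two-sided combination in (iii)) that let the inner relations $axa=a$ and $bxb=b$ be applied. Once those substitutions are in place, every remaining manipulation is a routine collapse via $zMz=z$ and the idempotency of the prescribed idempotents, so no auxiliary lemma beyond the definitions is required.
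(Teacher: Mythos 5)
Your proof is correct: in each part you exhibit the candidate $y$ and verify the three defining identities $y(ab)y=y$, $y(ab)=\text{(first idempotent)}$, $(ab)y=1-\text{(second idempotent)}$, and every computation checks out --- in particular the signs are handled correctly (e.g.\ $z=(xab)^{(2)}_{t,1-p}$ gives $(xab)z=1-(1-p)=p$), and you correctly isolate the inner relations $axa=a$, $bxb=b$ as the only place where reflexivity (membership in $R^{(1,2)}$ rather than $R^{(2)}$) is used. The paper itself states this proposition without proof; the intended route is almost certainly to observe that $a^{(2)}_{p,q}$ is the $(p,1-q)$-inverse in the $(b,c)$-sense (since $y=py=y(1-q)$ forces $yR=pR$ and $Ry=R(1-q)$) and then specialize Theorems~\ref{ke-reverse(b, c)-th2} and~\ref{ke-reverse(b, c)-th3} and their combination, checking that the hypothesis there (e.g.\ $c=caw(a^{(t,c)}aw)^{(b,s)}a^{(t,c)}$) reduces to the inner relation $axa=a$. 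Your direct verification is self-contained, shorter than routing through the $(b,c)$-machinery, and arguably cleaner since it avoids translating between the idempotent and the ideal descriptions; what it does not make visible is how the proposition sits as a special case of the ring-theoretic reverse-order-law theorems earlier in the paper. Either way, nothing is missing and no step fails.
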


Finally, we consider the more general case of reverse order law
$(a_1a_2)^{(b_3, c_3)}=a_2^{(b_2, c_2)}a_1^{(b_1, c_1)}$ when
$a_i$ is $(b_i, c_i)$-invertible with $(b_i, c_i)$-inverse
$a_i^{(b_i, c_i)}~(i=1, 2)$. Before to discuss it, we need the
following lemma which can be seen in \cite{Ke-dragana}. We will
give the proof of the following result for the sake of
completeness.
\begin{lem}\emph{\cite[Theorem 2.11]{Ke-dragana}}\label{ke-(b,c)-4-lem0-ke3th2.11}
Let $a, b, c\in R$ be such that $a^{(b, c)}$ exists. Let $w\in R$ be such that $wR=bR$  and $w^\circ=c^\circ$ (or $Rw=Rc$). Then $aw, wa \in R^\#$, $a^{(b, c)}=w(aw)^\#=(wa)^\#w$, and $w=a^{(b, c)} aw=waa^{(b, c)}$.
\end{lem}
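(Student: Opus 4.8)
The plan is to set $y=a^{(b,c)}$ and translate every hypothesis into manageable relations via Lemma \ref{ke-reverse(b, c)-lem2}. By part (ii) of that lemma there is an element (namely $y$) with $yay=y$, $yR=bR$ and $Ry=Rc$. Combining $yR=bR$ with $wR=bR$ gives $wR=yR$, so $w=yp$ and $y=wq$ for some $p,q\in R$; likewise the left-hand hypothesis ($Rw=Rc$, equivalently $w^{\circ}=c^{\circ}$, which forces $w^{\circ}=y^{\circ}$ since $Ry=Rc$ already yields $y^{\circ}=c^{\circ}$) should be massaged into $w=my$, $y=nw$. The first payoff is cheap: from $w=yp$ and $yay=y$ one gets $yaw=ya\cdot yp=(yay)p=yp=w$, and from $yay=y$ one reads $ay-1\in y^{\circ}=w^{\circ}$, hence $w(ay-1)=0$, i.e.\ $way=w$. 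These two equalities $yaw=w=way$ are exactly the last claim $w=a^{(b,c)}aw=waa^{(b,c)}$, so that part of the theorem is discharged immediately.

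Next I would record the two idempotents. Since $ayay=a(yay)=ay$ and $yaya=(yay)a=ya$, both $ay$ and $ya$ lie in $R^{\bullet}$, and from $yaw=w=way$ one checks $(ay)(aw)=a(yaw)=aw=(aw)(ay)$, so $ay$ is a two-sided identity for $aw$; symmetrically $ya$ is a two-sided identity for $wa$. I also note the auxiliary relations $y(ay)=yay=y=(ya)y$. The heart of the argument is then group invertibility of $aw$, which I would establish through the criterion $aw\in(aw)^2R\cap R(aw)^2$. The right-hand membership comes from $q$: since $awq=a(wq)=ay$, one has $(aw)^2q=(aw)(ay)=aw$. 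The left-hand membership uses $n$: first $(any)(aw)=an(yaw)=anw=a(nw)=ay$, whence $(any)(aw)^2=(ay)(aw)=aw$. Together these give $aw\in R^{\#}$, and the symmetric computation with $p,m$ yields $wa\in R^{\#}$.

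Finally I would pin down the group idempotent and read off the formulas. Because $ay$ is an idempotent acting as a two-sided identity for $aw$ and, by the witnesses above, $ay=(aw)q\in(aw)R$ and $ay=(any)(aw)\in R(aw)$, it must coincide with the group idempotent $aw(aw)^{\#}=(aw)^{\#}(aw)$. Then, using $w=yaw$ and $y(ay)=y$, one computes $w(aw)^{\#}=yaw(aw)^{\#}=y\bigl(aw(aw)^{\#}\bigr)=y(ay)=y$; the mirror computation with $ya$ as the group idempotent of $wa$ and $w=way$ gives $(wa)^{\#}w=\bigl((wa)^{\#}wa\bigr)y=(ya)y=y$. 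This establishes $a^{(b,c)}=w(aw)^{\#}=(wa)^{\#}w$. The step I expect to be the main obstacle is the group-invertibility argument, and more precisely securing the \emph{left}-sided witness for $aw\in R(aw)^2$: this is where one genuinely needs $y$ to be a left multiple of $w$, so the delicate point is to extract the relation $Rw=Ry$ (or to verify that the weaker form $w^{\circ}=c^{\circ}$, together with $wR=bR$ and the regularity $yay=y$, already suffices); once both one-sided ideal equalities are in hand, the remaining identities are routine manipulations of $yaw=w=way$.
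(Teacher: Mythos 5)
Your argument is correct, and for the key step it takes a genuinely different route from the paper's. To get $aw\in R^{\#}$ the paper shows $(aw)^{\circ}=w^{\circ}$, imports the decomposition $R=abR\oplus c^{\circ}$ from \cite[Proposition 2.7]{D} to conclude $R=awR\oplus(aw)^{\circ}$, and then cites \cite{Hartwig-1976block} for group invertibility; afterwards it sets $v=w(aw)^{\#}$ and re-verifies all four defining equations of Lemma \ref{ke-reverse(b, c)-lem2}(iii) from scratch. You instead use the elementary criterion $x\in R^{\#}\Leftrightarrow x\in x^{2}R\cap Rx^{2}$ with explicit witnesses ($q$ with $y=wq$ on the right, $n$ with $y=nw$ on the left), and then identify $aw(aw)^{\#}$ with the idempotent $ay$ — legitimately, since an idempotent lying in $(aw)R\cap R(aw)$ and acting as a two-sided identity on $aw$ must coincide with the group idempotent — which lets you read off $w(aw)^{\#}=y(ay)=y$ in one line. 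Your version is more self-contained (no external citations), at the cost of having to produce the ideal-membership witnesses by hand; the paper's version is shorter once the two cited facts are granted. Your appeal to symmetry for $wa$ is sound: all the hypotheses ($yay=y$, $yR=wR$, $Ry=Rw$, $yaw=w=way$) are invariant under passing to the opposite ring, which swaps the two one-sided witnesses.

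The one point you leave open — passing from the stated hypothesis $w^{\circ}=c^{\circ}$ to $Rw=Rc$, hence to $Rw=Ry$, which your left-sided witness $y=nw$ genuinely requires — is closed exactly as you suspect, and exactly as the paper does it in its first two lines: existence of $a^{(b,c)}$ gives $b,c\in R^{-}$ by Lemma \ref{ke-reverse(b, c)-lem2}(iii), and $wR=bR$ then forces $w\in R^{-}$ (write $w=bs$ and $b=wt$, so $w=bb^{-}w=w(tb^{-})w$). For regular $w$ one has $1-w^{-}w\in w^{\circ}=y^{\circ}$, whence $y=yw^{-}w\in Rw$, while $1-ay\in y^{\circ}=w^{\circ}$ gives $w=way\in Ry$; so $Rw=Ry$ and your proof is complete. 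Note that your derivation of $way=w$ from $ay-1\in y^{\circ}=w^{\circ}$ only needs the annihilator equality, so that part is already unconditional.
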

\proof   If $a^{(b, c)}$ exists, by Lemma \ref{ke-reverse(b, c)-lem2}, $b, c\in R^-$. The condition $b\in R^-$ and  $wR=bR$ imply that $w\in R^-$. So $w^\circ=c^\circ$ is equivalent to $Rw=Rc$.

     By Lemma \ref{drazinLAA12-thm2.2}, we have $a^{(b, c)}aa^{(b, c)}=a^{(b, c)}$, $a^{(b, c)}R=bR=wR$ and $Ra^{(b, c)}=Rc=Rw$. Then there exist $x, y, z\in R$ such that $w=a^{(b, c)}x=ya^{(b, c)}$ and $a^{(b, c)}=wz$. So $w=a^{(b, c)}aa^{(b, c)}x=a^{(b, c)}aw$. Similarly, $w=waa^{(b, c)}$.

     Let $u\in (aw)^\circ$. We have $awu=0$, which gives $wu=a^{(b, c)}awu=0$, i.e., $u\in w^\circ$. Hence $(aw)^\circ=w^\circ$.
     Since $a^{(b, c)}$ exists, using \cite[Proposition 2.7]{D}, $R=abR\oplus c^\circ$. Thus $R=awR\oplus (aw)^\circ$.
    From the proof of \cite[Proposition 7, p.205]{Hartwig-1976block}, $aw\in R^\#$.

    Let $v=w(aw)^\#$. We can show that $v$ is the $(b, c)$-inverse of $a$. Indeed, as $bR=wR$ and $Rw=Rc$, there are $x_1, x_2, y_1, y_2\in R$ such that $b=wx_1$, $w=by_1=x_2c$ and $c=y_2w$. Thus,
    $$bb^-v=bb^-w(aw)^\#=bb^-by_1(aw)^\#=by_1(aw)^\#=w(aw)^\#=v.$$
    Similarly, we get $vc^-c=v$. Since $aw((aw)^\#aw-1)=0$ and $(aw)^\circ=w^\circ$, we have $w=w(aw)^\#aw$. Hence, $$vab=w(aw)^\#ab=w(aw)^\#awx_1=wx_1=b,$$ $$cav=caw(aw)^\#=y_2waw(aw)^\#=y_2w=c.$$
    Therefore, by Lemma \ref{ke-reverse(b, c)-lem2}, $v=a^{(b, c)}$.

    Similarly, we can prove that $wa\in R^\#$ and $a^{(b, c)}=(wa)^\#w$.
\qed

\begin{thm}\label{ke-reverse(b, c)-th-main}
Let $a_i, b_i, c_i, b_3, c_3 \in R~(i=1, 2)$ be such that $a_i$ is
$(b_i, c_i)$-invertible with $(b_i, c_i)$-inverse $a_i^{(b_i,
c_i)}~(i=1, 2)$. Let $a'_1, a_2'\in R$ satisfy
$$a_i'R=b_iR, ~ Ra_i'=Rc_i~(i=1, 2),$$
$$a_2'a_1'R=b_3R, ~ Ra_2'a_1'=Rc_3.$$
If $a_1^{(b_1, c_1)}a_1$ commutes with $a_2a_2'$ and
$a_2a_2^{(b_2, c_2)}$ commutes with $a_1'a_1$, then $a_1a_2$ is
$(b_3, c_3)$-invertible and
$$(a_1a_2)^{(b_3, c_3)}=a_2^{(b_2, c_2)}a_1^{(b_1, c_1)}.$$
\end{thm}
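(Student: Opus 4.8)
The plan is to reduce everything to the two group inverses produced by Lemma~\ref{ke-(b,c)-4-lem0-ke3th2.11} and then to verify the intrinsic characterization of the $(b_3,c_3)$-inverse given in Lemma~\ref{ke-reverse(b, c)-lem2}(ii) for the candidate $y:=a_2^{(b_2,c_2)}a_1^{(b_1,c_1)}$. Write $w_i:=a_i'$, $p_i:=a_i^{(b_i,c_i)}$ and $w:=a_2'a_1'=w_2w_1$. Since $a_i^{(b_i,c_i)}$ exists and $w_iR=b_iR$, $Rw_i=Rc_i$, Lemma~\ref{ke-(b,c)-4-lem0-ke3th2.11} applies to each $a_i$ with the element $w_i$, so $a_iw_i,\,w_ia_i\in R^{\#}$ and
\[
p_i=w_i(a_iw_i)^{\#}=(w_ia_i)^{\#}w_i,\qquad w_i=p_ia_iw_i=w_ia_ip_i\quad(i=1,2).
\]
Moreover Lemma~\ref{ke-reverse(b, c)-lem2}(ii) gives $p_iR=b_iR=w_iR$ and $Rp_i=Rc_i=Rw_i$. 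I would then verify that $y$ is an outer inverse of $a_1a_2$ with $yR=b_3R=wR$ and $Ry=Rc_3=Rw$; by Lemma~\ref{ke-reverse(b, c)-lem2}(ii) this forces $a_1a_2\in R^{(b_3,c_3)}$ and $(a_1a_2)^{(b_3,c_3)}=y$.

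First I would convert the hypotheses into one idempotent commutation. Put $e_1:=p_1a_1$ and $f_2:=a_2p_2$, both idempotent since $p_ia_ip_i=p_i$. Because $p_1a_1$ commutes with $a_2w_2\in R^{\#}$, Lemma~\ref{ke-(b,c)-4-lem6-commut} shows $e_1$ commutes with $(a_2w_2)^{\#}$, hence with $f_2=a_2w_2(a_2w_2)^{\#}$; symmetrically, $a_2p_2$ commuting with $w_1a_1\in R^{\#}$ gives that $f_2$ commutes with $(w_1a_1)^{\#}$, hence with $e_1=(w_1a_1)^{\#}(w_1a_1)$. Thus $e_1f_2=f_2e_1$, i.e.\ $p_1a_1a_2p_2=a_2p_2p_1a_1$. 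Using this with $p_ia_ip_i=p_i$, the outer-inverse identity is a two-line computation, $y(a_1a_2)y=p_2(p_1a_1a_2p_2)p_1=p_2(a_2p_2p_1a_1)p_1=(p_2a_2p_2)(p_1a_1p_1)=p_2p_1=y$.

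The crux is the four ideal inclusions. Two are immediate: using the \emph{raw} hypotheses $p_1a_1\cdot a_2w_2=a_2w_2\cdot p_1a_1$ and $a_2p_2\cdot w_1a_1=w_1a_1\cdot a_2p_2$ together with $w_i=p_ia_iw_i=w_ia_ip_i$, one finds $y(a_1a_2)w=(p_2a_2w_2)(p_1a_1w_1)=w_2w_1=w$ and $w(a_1a_2)y=(w_2a_2p_2)(w_1a_1p_1)=w_2w_1=w$, so $w\in yR$ and $w\in Ry$, giving $wR\subseteq yR$ and $Rw\subseteq Ry$. The reverse inclusions are the main obstacle, since $yR=p_2p_1R$ and $Ry=Rp_2p_1$ do not visibly collapse onto $w_2w_1R$ and $Rw_2w_1$. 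To get $yR\subseteq wR$ I would write $y=w_2(a_2w_2)^{\#}p_1$ and prove $(a_2w_2)^{\#}w_1\in w_1R$: indeed, using $w_1=e_1w_1$ and the commutation of $e_1$ with $(a_2w_2)^{\#}$, $(a_2w_2)^{\#}w_1=e_1(a_2w_2)^{\#}w_1=p_1\bigl(a_1(a_2w_2)^{\#}w_1\bigr)\in p_1R=w_1R$; since $p_1R=w_1R$ this yields $yR=w_2(a_2w_2)^{\#}p_1R\subseteq w_2w_1R=wR$. Dually, writing $y=p_2(w_1a_1)^{\#}w_1$ and using $w_2=w_2f_2$ with the commutation of $f_2$ and $(w_1a_1)^{\#}$, one gets $w_2(w_1a_1)^{\#}\in Rw_2$, hence $Ry\subseteq Rw_2w_1=Rw$.

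Combining the four inclusions gives $yR=wR=b_3R$ and $Ry=Rw=Rc_3$, together with $y(a_1a_2)y=y$. By Lemma~\ref{ke-reverse(b, c)-lem2}(ii), $a_1a_2$ is $(b_3,c_3)$-invertible with $(a_1a_2)^{(b_3,c_3)}=y=a_2^{(b_2,c_2)}a_1^{(b_1,c_1)}$, as claimed. The only genuinely delicate point is the pair of ``hard'' inclusions $yR\subseteq b_3R$ and $Ry\subseteq Rc_3$, where the two commutativity hypotheses are used precisely to move the group inverses $(a_2w_2)^{\#}$ and $(w_1a_1)^{\#}$ past the relevant factors.
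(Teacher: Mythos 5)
Your proof is correct and follows essentially the same route as the paper: both verify Drazin's outer-inverse-plus-ideals characterization (Lemma \ref{ke-reverse(b, c)-lem2}(ii)) for the candidate $y=a_2^{(b_2,c_2)}a_1^{(b_1,c_1)}$, using the group-inverse representations of Lemma \ref{ke-(b,c)-4-lem0-ke3th2.11} and pushing the commutativity hypotheses onto the group inverses via Lemma \ref{ke-(b,c)-4-lem6-commut}. The only difference is stylistic: you obtain the inclusions $b_3R\subseteq yR$ and $Rc_3\subseteq Ry$ from the explicit identities $y(a_1a_2)w=w=w(a_1a_2)y$, whereas the paper closes a circular chain of ideal inclusions to get the same equalities.
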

\proof  First, if $a_i$ is $(b_i, c_i)$-invertible with $(b_i,
c_i)$-inverse $a_i^{(b_i, c_i)}~(i=1, 2)$, by Lemma
\ref{drazinLAA12-thm2.2}, it follows that
 \begin{equation}\label{ke-(b,c)-4-eq1}
 a_i^{(b_i, c_i)}a_ia_i^{(b_i, c_i)}=a_i^{(b_i, c_i)},~ a_i^{(b_i, c_i)}R=b_iR,~ Ra_i^{(b_i, c_i)}=Rc_i ~(i=1, 2).
 \end{equation}
Since $a_i^{(b_i, c_i)}~(i=1, 2)$ exist and $a_i'R=b_iR, ~ Ra_i'=Rc_i~(i=1, 2)$, as an application of Lemma \ref{ke-(b,c)-4-lem0-ke3th2.11},
 we have $a_ia_i', a_i'a_i~(i=1, 2)\in R^\#$, and
\begin{equation}\label{ke-(b,c)-4-eq1-group}
a_i^{(b_i, c_i)}=a_i'(a_ia_i')^\#=(a_i'a_i)^\#a_i',~~\mbox{and}~~ a_i'=a_i^{(b_i, c_i)}a_i a_i'=a_i'a_ia_i^{(b_i, c_i)} ~(i=1, 2).
\end{equation}
As $a_1'a_1,~ a_2a_2'\in R^\#$,  $a_1^{(b_1, c_1)}a_1$
commutes with $a_2a_2'$ and $a_2a_2^{(b_2, c_2)}$ commutes with
$a_1'a_1$, by Lemma \ref{ke-(b,c)-4-lem6-commut}, we obtain the
following equations
\begin{equation}\label{ke-(b,c)-4-eq2-group-commu}
a_1^{(b_1, c_1)}a_1(a_2a_2')^\#=(a_2a_2')^\#a_1^{(b_1, c_1)}a_1,
\end{equation}
\begin{equation}\label{ke-(b,c)-4-eq2'-group-commu}
a_2a_2^{(b_2, c_2)}(a_1'a_1)^\#=(a_1'a_1)^\#a_2 a_2^{(b_2, c_2)}.
\end{equation}
Therefore, using equations (\ref{ke-(b,c)-4-eq1-group}) and (\ref{ke-(b,c)-4-eq2-group-commu}) we have
\begin{equation}\label{ke-(b,c)-4-eq1-(b,c)-commu}
\begin{split}\begin{array}{ccl}
a_1^{(b_1, c_1)}a_1a_2a_2^{(b_2, c_2)}
&\overset{\tiny{(\ref{ke-(b,c)-4-eq1-group})}}=&
a_1^{(b_1, c_1)}a_1a_2(a_2'(a_2a_2')^\#)
=(a_1^{(b_1, c_1)}a_1a_2a_2')(a_2a_2')^\#\\
&=&(a_2a_2'a_1^{(b_1, c_1)}a_1)(a_2a_2')^\#
=a_2a_2'(a_1^{(b_1, c_1)}a_1(a_2a_2')^\#)\\
&\overset{\tiny{(\ref{ke-(b,c)-4-eq2-group-commu})}}=&
a_2a_2'((a_2a_2')^\#a_1^{(b_1, c_1)}a_1)
\overset{\tiny{(\ref{ke-(b,c)-4-eq1-group})}}
=a_2a_2^{(b_2, c_2)}a_1^{(b_1, c_1)}a_1.
\end{array}\end{split}\end{equation}

Now we will  prove that $a_1a_2\in R^{(b_3, c_3)}$ and $(a_1a_2)^{(b_3, c_3)}=a_2^{(b_2, c_2)}a_1^{(b_1, c_1)}.$ If we  prove that
\begin{equation}\label{ke-(b,c)-4-eq3}
(a_2^{(b_2, c_2)}a_1^{(b_1, c_1)})(a_1a_2)(a_2^{(b_2, c_2)}a_1^{(b_1, c_1)})=a_2^{(b_2, c_2)}a_1^{(b_1, c_1)},
\end{equation}
\begin{equation}\label{ke-(b,c)-4-eq4}
a_2^{(b_2, c_2)}a_1^{(b_1, c_1)}R=b_3R,\quad Ra_2^{(b_2, c_2)}a_1^{(b_1, c_1)}=Rc_3.
\end{equation}
Then by Lemma \ref{drazinLAA12-thm2.2}, the equation
$(a_1a_2)^{(b_3, c_3)}=a_2^{(b_2, c_2)}a_1^{(b_1, c_1)}$ hold.
Indeed,
$$\begin{array}{ccl}
(a_2^{(b_2, c_2)}a_1^{(b_1, c_1)})(a_1a_2)(a_2^{(b_2, c_2)}a_1^{(b_1, c_1)})&=&
a_2^{(b_2, c_2)}(a_1^{(b_1, c_1)}a_1a_2a_2^{(b_2, c_2)})a_1^{(b_1, c_1)}\\
&\overset{\tiny{(\ref{ke-(b,c)-4-eq1-(b,c)-commu})}}=&a_2^{(b_2, c_2)}(a_2a_2^{(b_2, c_2)}a_1^{(b_1, c_1)}a_1)a_1^{(b_1, c_1)}\\
&=&(a_2^{(b_2, c_2)}a_2a_2^{(b_2, c_2)})(a_1^{(b_1, c_1)}a_1a_1^{(b_1, c_1)})\\
&\overset{\tiny{(\ref{ke-(b,c)-4-eq1})}}=&a_2^{(b_2,
c_2)}a_1^{(b_1, c_1)},
\end{array}$$
that is, the equation (\ref{ke-(b,c)-4-eq3}) holds.

Since $a_i^{(b_i, c_i)}a_ia_i^{(b_i, c_i)}=a_i^{(b_i, c_i)}$, we have $a_i^{(b_i, c_i)}R=a_i^{(b_i, c_i)}a_iR$, $R a_i a_i^{(b_i, c_i)}=Ra_i^{(b_i, c_i)}~(i=1,2)$. Thus,
$$\begin{array}{ccl}
a_2^{(b_2, c_2)}a_1^{(b_1, c_1)}R
&=&a_2^{(b_2, c_2)}a_1^{(b_1, c_1)}a_1R
\overset{\tiny{(\ref{ke-(b,c)-4-eq1-group})}}=
(a_2'(a_2a_2')^\#)a_1^{(b_1, c_1)}a_1R
=a_2'((a_2a_2')^\#a_1^{(b_1, c_1)}a_1)R\\
&\overset{\tiny{(\ref{ke-(b,c)-4-eq2-group-commu})}}=&
a_2'(a_1^{(b_1, c_1)}a_1(a_2a_2')^\#)R
\overset{\tiny{(\ref{ke-(b,c)-4-eq1-group})}}=
a_2'(a_1'(a_1a_1')^\#)a_1(a_2a_2')^\#R \\
%&=&a_2'a_1'(a_1a_1')^\#a_1(a_2a_2')^\#R \\
&\subseteq& a_2'a_1'R=a_2'b_1R\overset{\tiny{(\ref{ke-(b,c)-4-eq1})}}=
a_2'a_1^{(b_1, c_1)}R=a_2'a_1^{(b_1, c_1)}a_1R\\
&\overset{\tiny{(\ref{ke-(b,c)-4-eq1-group})}}=&
(a_2^{(b_2, c_2)}a_2a_2')a_1^{(b_1, c_1)}a_1R=a_2^{(b_2, c_2)}(a_2a_2'a_1^{(b_1, c_1)}a_1)R\\
&=&a_2^{(b_2, c_2)}(a_1^{(b_1, c_1)}a_1 a_2a_2')R\\
&\subseteq&a_2^{(b_2, c_2)}a_1^{(b_1, c_1)}R.
\end{array}$$
Consequently, $a_2^{(b_2, c_2)}a_1^{(b_1, c_1)}R=a_2'a_1'R=b_3R$,
i.e. the left equation of (\ref{ke-(b,c)-4-eq4}) holds.

Similarly, we get
$$\begin{array}{ccl}
Ra_2^{(b_2, c_2)}a_1^{(b_1, c_1)}&=&Ra_2a_2^{(b_2, c_2)}a_1^{(b_1, c_1)}\overset{\tiny{(\ref{ke-(b,c)-4-eq1-group})}}=
Ra_2a_2^{(b_2, c_2)}((a_1'a_1)^\#a_1')=R(a_2a_2^{(b_2, c_2)}(a_1'a_1)^\#)a_1'\\
&\overset{\tiny{(\ref{ke-(b,c)-4-eq2'-group-commu})}}=&
R((a_1'a_1)^\#a_2a_2^{(b_2, c_2)})a_1'
\overset{\tiny{(\ref{ke-(b,c)-4-eq1-group})}}=
R(a_1'a_1)^\#a_2((a_2'a_2)^\#a_2')a_1'\\
%&=&R(a_1'a_1)^\#a_2(a_2'a_2)^\#a_2'a_1'\\
&\subseteq& Ra_2'a_1'=Rc_2a_1'\overset{\tiny{(\ref{ke-(b,c)-4-eq1})}}
=Ra_2^{(b_2, c_2)}a_1'=Ra_2a_2^{(b_2, c_2)}a_1'\\
&\overset{\tiny{(\ref{ke-(b,c)-4-eq1-group})}}=&
Ra_2a_2^{(b_2, c_2)}(a_1'a_1a_1^{(b_1, c_1)})
=R(a_2a_2^{(b_2, c_2)}a_1'a_1)a_1^{(b_1, c_1)}\\
&=&R(a_1'a_1 a_2a_2^{(b_2, c_2)})a_1^{(b_1, c_1)}\\
&\subseteq&Ra_2^{(b_2, c_2)}a_1^{(b_1, c_1)}.
\end{array}$$
Thus, $Ra_2^{(b_2, c_2)}a_1^{(b_1, c_1)}=Ra_2'a_1'=Rc_3$, i.e. the
right equation of (\ref{ke-(b,c)-4-eq4}) holds.
\qed

Let $b_i=c_i=d_i~(i=1, 2, 3)$ in Theorem \ref{ke-reverse(b,
c)-th-main}, then we have the following result for Mary inverse.

\begin{cor}
Let $a_i, d_i \in R~(i=1, 2)$ be such that $a_i^{\| d_i}~(i=1,2)$
exists. If there exists $d_3\in R$ such that $d_2d_1R=d_3R, ~
Rd_2d_1=Rd_3,$ and  $a_1^{\| d_1}a_1$ commutes with $a_2d_2$ and
$a_2a_2^{\| d_2}$ commutes with $d_1a_1$, then $a_1a_2\in R^{\|
d_3}$  and
$$(a_1a_2)^{\| d_3}=a_2^{\| d_2}a_1^{\| d_1}.$$
\end{cor}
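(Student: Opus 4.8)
The plan is to recognize this Corollary as the direct specialization of Theorem \ref{ke-reverse(b, c)-th-main} obtained by setting $b_i = c_i = d_i$ for $i = 1, 2, 3$. Recall that the inverse along $d$ coincides with the $(d,d)$-inverse (by \cite[Proposition 6.1]{D}), so that $a_i^{\| d_i} = a_i^{(d_i, d_i)}$ whenever either side exists. Thus the hypothesis that $a_i^{\| d_i}$ exists is exactly the hypothesis that $a_i$ is $(d_i, d_i)$-invertible, and the conclusion $(a_1 a_2)^{\| d_3} = a_2^{\| d_2} a_1^{\| d_1}$ is exactly the conclusion $(a_1 a_2)^{(d_3, d_3)} = a_2^{(d_2, d_2)} a_1^{(d_1, d_1)}$ rewritten in the Mary-inverse notation.

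The one point that must be checked is that the auxiliary elements $a_1', a_2'$ required by Theorem \ref{ke-reverse(b, c)-th-main} can be produced. Here the natural and decisive choice is $a_i' = d_i$ for $i = 1, 2$. With this choice the conditions $a_i' R = b_i R$ and $R a_i' = R c_i$ read $d_i R = d_i R$ and $R d_i = R d_i$, which hold trivially, while the conditions $a_2' a_1' R = b_3 R$ and $R a_2' a_1' = R c_3$ become $d_2 d_1 R = d_3 R$ and $R d_2 d_1 = R d_3$, which are precisely the two ideal hypotheses placed on $d_3$ in the statement of the Corollary.

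It then remains to match the two commutativity hypotheses. Substituting $a_2' = d_2$ turns the requirement that $a_1^{(b_1, c_1)} a_1$ commute with $a_2 a_2'$ into the requirement that $a_1^{\| d_1} a_1$ commute with $a_2 d_2$, and substituting $a_1' = d_1$ turns the requirement that $a_2 a_2^{(b_2, c_2)}$ commute with $a_1' a_1$ into the requirement that $a_2 a_2^{\| d_2}$ commute with $d_1 a_1$; both are assumed. Every hypothesis of Theorem \ref{ke-reverse(b, c)-th-main} being thereby verified, the theorem delivers that $a_1 a_2$ is $(d_3, d_3)$-invertible and that $(a_1 a_2)^{(d_3, d_3)} = a_2^{(d_2, d_2)} a_1^{(d_1, d_1)}$, which is the assertion after reverting to the $\| d$ notation. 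Since no genuine calculation is involved, I expect no real obstacle; the only step requiring care is the bookkeeping choice $a_i' = d_i$, which is what makes the abstract ideal conditions of the general theorem collapse onto the concrete hypotheses stated here.
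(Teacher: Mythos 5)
Your proposal is correct and follows exactly the paper's own route: the paper likewise specializes Theorem \ref{ke-reverse(b, c)-th-main} by taking $a_i'=d_i$ and $b_i=c_i=d_i$ $(i=1,2,3)$, noting that the inverse along $d$ is the $(d,d)$-inverse, so the ideal and commutativity hypotheses collapse to those stated in the corollary. No gaps.
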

\proof  By \cite[Theorem 7]{Mary2011}, we know that if $a^{\| d}$ exists, then $ad, da\in R^\#$ and $a^{\| d}=d(ad)^\#=(da)^\#$. Moreover, by the definition of Mary inverse, we obtain that $d=a^{\| d}ad=daa^{\| d}$, $a^{\| d}R=dR$ and $Ra^{\| d}=Rd$.
Take $a_i'=d_i~(i=1, 2)$ and $b_i=c_i=d_i~(i=1, 2, 3)$ in Theorem \ref{ke-reverse(b, c)-th-main}, as required.
\qed

\medskip
As an application of this theorem, we first recall the basic properties of outer generalized inverses with prescribed range and kernel (see \cite{Djordjevic-book08}). Let $X$ and $Y$ be Banach spaces and let $\mathcal{L}(X, Y)$ denote the set of
all bounded operators from $X$ to $Y$. For $A\in \mathcal{L}(X, Y)$ we use $N(A)$ and $R(A)$ to denote the range and the kernel of $A$. We say that $C\in \mathcal{L}(Y, X)$ is an outer generalized inverse of $A$, if $CAC=C$. Let $T$ and $S$ be subspaces of $X$ and $Y$, resp., such that there exists an outer generalized inverse $A^{(2)}_{T, S}\in \mathcal{L}(Y, X)$ of $A$ with range equal to $T$ and kernel equal to $S$, i.e., $A^{(2)}_{T, S}$ satisfies
$$A^{(2)}_{T, S}AA^{(2)}_{T, S}=A^{(2)}_{T, S},\quad R(A^{(2)}_{T, S})=T, \quad N(A^{(2)}_{T, S})=S.$$
If $A, T$ and $S$ given as above, then $A^{(2)}_{T, S}$ exists if and
only if $T$ and $S$, respectively, are closed
and complemented subspaces of $X$ and $Y$, the reduction $A_T:
T\rightarrow A(T)$ is invertible and $A(T)\oplus S = Y$. In this
case $A^{(2)}_{T, S}$ is unique.

The following result can be seen in \cite[Theorem 3.3]{Djordjevic-ActaSM-01}.

\begin{prop}
Let $A \in \mathcal{L}(Y, Z)$, $B \in \mathcal{L}(X, Y)$, and let $A^{(2)}_{M, N}\in \mathcal{L}(Z, Y)$, $B^{(2)}_{T, S}\in \mathcal{L}(Y, X)$ and $(AB)^{(2)}_{K, L}\in \mathcal{L}(Z, X)$ be outer inverses of $A, B$ and $AB$ with subspaces $K, T\subseteq X,$ $M, S\subseteq Y,$ and $N, L\subseteq Z$. Let operators $A'_{M, N}\in \mathcal{L}(Z, Y)$, $B'_{T, S}\in \mathcal{L}(Y, X)$ and $(AB)'_{K, L}\in \mathcal{L}(Z, X)$ satisfy
$$R(A'_{M, N})=M, ~N(A'_{M, N})=N, ~R(B'_{T, S})=T, ~N(B'_{T, S})=S,$$
$$R(B'_{T, S}A'_{M, N})=R((AB)'_{K, L})=K, ~N(B'_{T, S}A'_{M, N})=N((AB)'_{K, L})=L.$$
If $A^{(2)}_{M, N}A$ commutes with $BB'_{T, S}$ and $BB^{(2)}_{T, S}$ commutes with $A'_{M, N}A$, then
$$(AB)^{(2)}_{K, L}=B^{(2)}_{T, S}A^{(2)}_{M, N}.$$
\end{prop}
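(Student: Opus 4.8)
The plan is to recognize this proposition as the operator-theoretic shadow of Theorem~\ref{ke-reverse(b, c)-th-main} and to obtain it by a direct translation. The conceptual point is that an outer generalized inverse with prescribed range and kernel is precisely a $(b,c)$-inverse: by Lemma~\ref{ke-reverse(b, c)-lem2}(ii), an outer inverse $y$ of an element $a$ coincides with $a^{(b,c)}$ exactly when $yR=bR$ and $Ry=Rc$. So I would first record the dictionary between principal one-sided ideals and ranges/kernels in an endomorphism ring: $yR=bR$ holds iff $R(y)=R(b)$, and $Ry=Rc$ holds iff $N(y)=N(b)$. The forward implications are immediate from the factorization criterion quoted in the Introduction ($yR=bR$ iff $y=bu$ and $b=yv$ for some $u,v$). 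The converses use that the subspaces in question are closed and complemented, hence the operators with those ranges/kernels are relatively regular and one can exhibit the required factorizations through inner inverses; this hypothesis is automatic here, since the very existence of $A^{(2)}_{M,N}$, $B^{(2)}_{T,S}$ and $(AB)^{(2)}_{K,L}$ forces $M,N,T,S,K,L$ to be closed and complemented.

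Next I would collect all the operators into a single ring. Set $V=X\oplus Y\oplus Z$ and $R=\mathcal{L}(V)$, and embed $A,B,AB$, their outer inverses, and the auxiliary operators $A'_{M,N}$, $B'_{T,S}$, $(AB)'_{K,L}$ as the evident block operators supported on the appropriate summands. One checks routinely that this embedding is multiplicative on the composable products occurring in the statement (so $\widetilde{AB}=\tilde A\tilde B$, and $BB'_{T,S}$, $A^{(2)}_{M,N}A$ embed as operators supported on the $Y$-summand), that it sends outer-inverse relations to outer-inverse relations, and that the commutator of two embedded operators supported on the $Y$-summand vanishes in $R$ iff it vanishes on $Y$. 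Under the dictionary above, the embedded $A^{(2)}_{M,N}$ is the $(b_1,c_1)$-inverse of $\tilde A$ with $b_1=c_1=\widetilde{A'_{M,N}}$, because it is an outer inverse with range $M=R(A'_{M,N})$ and kernel $N=N(A'_{M,N})$; likewise $B^{(2)}_{T,S}$ is a $(b_2,c_2)$-inverse with $b_2=c_2=\widetilde{B'_{T,S}}$, and $(AB)^{(2)}_{K,L}$ is a $(b_3,c_3)$-inverse with $b_3=c_3=\widetilde{(AB)'_{K,L}}$.

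With these identifications the hypotheses of Theorem~\ref{ke-reverse(b, c)-th-main} hold verbatim for $a_1=A$, $a_2=B$: the ideal conditions $a_i'R=b_iR$ and $Ra_i'=Rc_i$ hold by the choice $b_i=c_i=\widetilde{A'}, \widetilde{B'}$, while $a_2'a_1'R=b_3R$ and $Ra_2'a_1'=Rc_3$ follow from the assumptions $R(B'_{T,S}A'_{M,N})=K=R((AB)'_{K,L})$ and $N(B'_{T,S}A'_{M,N})=L=N((AB)'_{K,L})$ via the dictionary. Finally, the two commutativity assumptions ``$A^{(2)}_{M,N}A$ commutes with $BB'_{T,S}$'' and ``$BB^{(2)}_{T,S}$ commutes with $A'_{M,N}A$'' are exactly ``$a_1^{(b_1,c_1)}a_1$ commutes with $a_2a_2'$'' and ``$a_2a_2^{(b_2,c_2)}$ commutes with $a_1'a_1$''. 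Theorem~\ref{ke-reverse(b, c)-th-main} then gives $(a_1a_2)^{(b_3,c_3)}=a_2^{(b_2,c_2)}a_1^{(b_1,c_1)}$, which reads $(AB)^{(2)}_{K,L}=B^{(2)}_{T,S}A^{(2)}_{M,N}$ after undoing the embedding. The only genuine work, and the step I expect to be most delicate, is the range/kernel-to-ideal dictionary of the first paragraph and the verification that it is preserved by the embedding into $\mathcal{L}(V)$; once that is in place, the proposition is a mechanical matching of hypotheses against the ring-theoretic theorem.
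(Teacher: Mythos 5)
Your proposal is correct, but note that the paper does not actually prove this proposition: it states it as a known result of Djordjevi\'c (\cite[Theorem 3.3]{Djordjevic-ActaSM-01}, proved there directly by operator-theoretic space decompositions) and merely frames it as ``an application'' of Theorem~\ref{ke-reverse(b, c)-th-main} without carrying out the translation. What you supply is precisely that missing translation, and it holds up. The two load-bearing steps are exactly the ones you flag: (a) the dictionary $y\mathcal{L}=b\mathcal{L}\Leftrightarrow R(y)=R(b)$ and $\mathcal{L}y=\mathcal{L}c\Leftrightarrow N(y)=N(c)$, whose nontrivial directions require the operators to be relatively regular --- which is indeed guaranteed here, since the existence of $A^{(2)}_{M,N}$, $B^{(2)}_{T,S}$, $(AB)^{(2)}_{K,L}$ forces $M,N,T,S,K,L$ to be closed and complemented, so $A'_{M,N}$, $B'_{T,S}$, $B'_{T,S}A'_{M,N}$ and $(AB)'_{K,L}$ all admit inner inverses; and (b) the embedding into $\mathcal{L}(X\oplus Y\oplus Z)$, under which the composable products and the two commutativity hypotheses pass through verbatim because $A^{(2)}_{M,N}A$, $BB'_{T,S}$, $BB^{(2)}_{T,S}$, $A'_{M,N}A$ all sit in the $(Y,Y)$ corner. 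With $b_i=c_i=\widetilde{A'},\widetilde{B'},\widetilde{(AB)'}$ the hypotheses of Theorem~\ref{ke-reverse(b, c)-th-main} match, and uniqueness of the $(b_3,c_3)$-inverse (equivalently, of the outer inverse with prescribed range $K$ and kernel $L$) closes the argument. Compared with the cited direct proof, your route buys a genuine deduction of the operator statement from the ring theorem, which is the conceptual point the paper gestures at; the price is the regularity bookkeeping in the dictionary. One small slip: in your first paragraph ``$Ry=Rc$ holds iff $N(y)=N(b)$'' should read $N(y)=N(c)$.
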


For the $(p, q)$-outer generalized inverse we have the following result.
\begin{prop}
Let $a, w\in R$ and $e, f, p, q, k, l\in R^\bullet$ be such that
$a_{p, q}^{(2)}$ and $w_{e, f}^{(2)}$ exist. If $a_{p, q}^{(2)}a$ commutes with $ww_{e, f}^{(2)}$,  then $(aw)_{k, l}^{(2)}$ exists and
$(aw)_{k, l}^{(2)}=w_{e, f}^{(2)}a_{p, q}^{(2)}$
if and only if $w_{e, f}^{(2)}pw=k$ and $a(1-f)a_{p, q}^{(2)}=1-l$.
\end{prop}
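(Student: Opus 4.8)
The plan is to use the characterization of the $(p,q)$-outer inverse, namely that $y=a_{p,q}^{(2)}$ means exactly $yay=y$, $ya=p$ and $ay=1-q$, together with the defining data of the component inverses. Writing $y_1=a_{p,q}^{(2)}$ and $y_2=w_{e,f}^{(2)}$, I have $y_1ay_1=y_1$, $y_1a=p$, $ay_1=1-q$, and $y_2wy_2=y_2$, $y_2w=e$, $wy_2=1-f$. The commutativity hypothesis is $y_1a\cdot wy_2=wy_2\cdot y_1a$, i.e. $p(1-f)=(1-f)p$. I would first record these relations and note what $(aw)_{k,l}^{(2)}=y_2y_1$ would require: by the same characterization, it is equivalent to the three conditions $(y_2y_1)(aw)(y_2y_1)=y_2y_1$, $(y_2y_1)(aw)=k$, and $(aw)(y_2y_1)=1-l$. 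So the whole proposition reduces to showing that, given the outer-inverse property $(y_2y_1)aw(y_2y_1)=y_2y_1$ holds automatically, the remaining two equalities $y_2y_1aw=k$ and $awy_2y_1=1-l$ are respectively equivalent to the stated conditions $y_2pw=k$ and $a(1-f)y_1=1-l$.

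First I would verify the outer-inverse (idempotent) equation, which should not need any hypothesis about $k$ or $l$: compute
\begin{equation*}
(y_2y_1)aw(y_2y_1)=y_2(y_1a)(wy_2)y_1=y_2\,p\,(1-f)\,y_1.
\end{equation*}
Using the commutativity $p(1-f)=(1-f)p$ this becomes $y_2(1-f)py_1$; then since $wy_2=1-f$ gives $y_2(1-f)=y_2wy_2=y_2$, and $py_1=y_1ay_1=y_1$, the product collapses to $y_2y_1$. So $y_2y_1$ is always an outer inverse of $aw$ under the single commutativity assumption, independent of $k,l$. This is the step where the hypothesis does the real work, so I expect it to be the conceptual heart of the argument; the only subtlety is applying $p(1-f)=(1-f)p$ in the correct place so that both the $y_2wy_2$ and the $y_1ay_1$ reductions become available.

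Next I would handle the two idempotent-image conditions. For the ``$ya=k$'' condition, compute $y_2y_1aw=y_2pw$ directly (just substituting $y_1a=p$), so $y_2y_1aw=k$ holds if and only if $y_2pw=k$, which is the first stated condition. For the ``$ay=1-q$'' analogue, compute $awy_2y_1=a(wy_2)y_1=a(1-f)y_1$, so $awy_2y_1=1-l$ holds if and only if $a(1-f)y_1=1-l$, the second stated condition. Substituting $y_1=a_{p,q}^{(2)}$ and $y_2=w_{e,f}^{(2)}$ gives exactly $w_{e,f}^{(2)}pw=k$ and $a(1-f)a_{p,q}^{(2)}=1-l$.

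Finally I would assemble the equivalence: by the uniqueness-cum-characterization of the $(k,l)$-outer inverse (Djordjevi\'c--Wei), $(aw)_{k,l}^{(2)}$ exists and equals $y_2y_1$ precisely when all three of $(y_2y_1)aw(y_2y_1)=y_2y_1$, $y_2y_1aw=k$, $awy_2y_1=1-l$ hold. Since the first is automatic from the commutativity hypothesis and the other two are respectively equivalent to the two displayed conditions, the proposition follows. The only point requiring care is that $k$ and $l$ are assumed idempotent at the outset, so the conditions $y_2pw=k$ and $a(1-f)y_1=1-l$ really are the defining $(k,l)$-data and no separate idempotency check is needed; I would remark on this briefly rather than recompute it.
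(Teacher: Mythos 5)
Your proposal is correct and follows essentially the same route as the paper: verify the outer-inverse identity $(w^{(2)}_{e,f}a^{(2)}_{p,q})(aw)(w^{(2)}_{e,f}a^{(2)}_{p,q})=w^{(2)}_{e,f}a^{(2)}_{p,q}$ from the commutativity hypothesis, then observe that $w^{(2)}_{e,f}a^{(2)}_{p,q}(aw)=w^{(2)}_{e,f}pw$ and $(aw)w^{(2)}_{e,f}a^{(2)}_{p,q}=a(1-f)a^{(2)}_{p,q}$, and invoke the defining characterization of the $(k,l)$-outer inverse. The only cosmetic difference is that you rewrite the commuting pair as the idempotents $p$ and $1-f$ before cancelling, whereas the paper swaps the factors $a^{(2)}_{p,q}a$ and $ww^{(2)}_{e,f}$ directly; the computation is the same.
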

\proof
   Since $a_{p, q}^{(2)}a$ commutes with $ww_{e, f}^{(2)}$, we have
   $$w_{e, f}^{(2)}a_{p, q}^{(2)}(aw)w_{e, f}^{(2)}a_{p, q}^{(2)}=w_{e, f}^{(2)}(a_{p, q}^{(2)}aww_{e, f}^{(2)})a_{p, q}^{(2)}=w_{e, f}^{(2)}(ww_{e, f}^{(2)}a_{p, q}^{(2)}a)a_{p, q}^{(2)}=w_{e, f}^{(2)}a_{p, q}^{(2)}.$$
   Also,
   $$w_{e, f}^{(2)}a_{p, q}^{(2)}(aw)=w_{e, f}^{(2)} (a_{p, q}^{(2)}a)w=w_{e, f}^{(2)}pw,$$
   $$(aw)w_{e, f}^{(2)}a_{p, q}^{(2)}=a(ww_{e, f}^{(2)})a_{p, q}^{(2)}=a(1-f)a_{p, q}^{(2)}.$$
   Therefore, by the definition of $(p, q)$-outer generalized inverse, we obtain the conclusion.
\qed

%\begin{cor}
%Let $a, w\in R$ and $e, f, p, q, k, l\in R^\bullet$ be such that
%$a_{p, q}^{(2)}$ and $w_{e, f}^{(2)}$ exist and $ep=k$,
%$f+q-fq=l$. If $a_{p, q}^{(2)}a$ commutes with $ww_{e, f}^{(2)}$,
%$pw=wp$ and $af=fa$,  then $(aw)_{k, l}^{(2)}$ exists and
%$$(aw)_{k, l}^{(2)}=w_{e, f}^{(2)}a_{p, q}^{(2)}.$$
%\end{cor}

\vspace{0.2cm} \noindent {\large\bf Acknowledgments}

           The first author is grateful to China Scholarship Council for supporting her to purse her further study with Professor D. S. Cvetkovi\'{c}-Ili\'{c} in University of Ni\v{s}, Serbia. %We would like to express our sincere thanks to the referee for his/her careful reading and valuable remarks that improved the presentation of our work tremendously.
           The research was supported by The National Natural Science Foundation of China (No. 11371089),
           the Specialized Research Fund for the Doctoral Program of Higher Education (No. 20120092110020),
           the Natural Science Foundation of Jiangsu Province (No. BK20141327),
           the Foundation of Graduate Innovation Program of Jiangsu Province (No. KYLX$_{-}$0080).
The second author is supported by the Ministry of Education and
Science, Republic of Serbia, grant no. 174007.

\end{document}